\documentclass[11pt,twoside,english,reqno,a4paper]{amsart}

\usepackage{listings,graphicx,amsmath,varioref,amscd,amssymb,color,bm,stmaryrd,amsthm,amsfonts,graphics,geometry,latexsym,pgf,pst-all} 
\theoremstyle{plain}
\usepackage{esint}
\usepackage{amsthm}

\usepackage{enumerate}

\theoremstyle{plain}
\newtheorem{theorem}{Theorem}[section]
\newtheorem{proposition}[theorem]{Proposition}

\newtheorem{lemma}[theorem]{Lemma}

\usepackage{geometry}
\usepackage{cancel}
\usepackage[colorlinks=true]{hyperref}

\theoremstyle{definition}
\newtheorem{defin}[theorem]{Definition}

\newtheorem{remark}[theorem]{Remark}

\theoremstyle{remark}

\usepackage{fouriernc}
\usepackage[T1]{fontenc}

\numberwithin{equation}{section}

\def\supp{\text{\text{supp}}}

\DeclareMathOperator{\R}{\mathbb{R}}
\DeclareMathOperator{\N}{\mathbb{N}}

\newcommand{\car}[1]{\raise1pt\hbox{$\chi$}_{#1}}

\begin{document}
\title[Singular anisotropic problems]{Existence and uniqueness of weak solutions to singular anisotropic elliptic problems}

\author[F. Esposito]{Francesco Esposito}
\author[F. Oliva]{Francescantonio Oliva}
\author[E. Vecchi]{Eugenio Vecchi}

\address[Francesco Esposito]{Dipartimento di Matematica e Informatica, Universit\`a della Calabria
\hfill \break\indent
Ponte Pietro Bucci Cubo 31B, 87036 Rende (CS), Italy}
\email{\tt francesco.esposito@unical.it}

\address[Francescantonio Oliva]{
Dipartimento di Scienze di Base e Applicate per l' Ingegneria, Sapienza Universit\`a di Roma
\hfill \break\indent
Via Scarpa 16, 00161 Roma, Italy}
\email{\tt francescantonio.oliva@uniroma1.it}

\address[Eugenio Vecchi]{Dipartimento di Matematica, Universit\`a di Bologna
\hfill \break\indent
Piazza di Porta San Donato 5, 40126 Bologna, Italy}
\email{\tt eugenio.vecchi2@unibo.it}

\makeatletter
\@namedef{subjclassname@2020}{%
  \textup{2020} Mathematics Subject Classification}
\makeatother

\begin{abstract}
In this paper we consider a quasilinear singular anisotropic elliptic problem with a $p$-sublinear perturbation. We prove uniqueness of weak solutions and we provide necessary and sufficient conditions for the existence of weak solutions in the same spirit of the celebrated paper by Lazer and McKenna.
\end{abstract}

\subjclass{35J75, 35J70, 35J92, 35A02, 35B09, 35B51.}
\keywords{Existence of solutions, uniqueness of solutions, singular problems, anisotropic operators, Finsler norms.}

\thanks{All the authors are member of INdAM and this work is part of the GNAMPA Project 2025 {\em Problemi singolari e degeneri: esistenza, unicita' e analisi delle proprieta' qualitative delle soluzioni} - CUP $E5324001950001$. F.E. is partially supported by the PID Project (Spain) PID2024-155314NB-I00 and by PRIN Project (Italy) P2022YFAJH {\em Linear and Nonlinear PDE's: New directions and Applications}. E.V. is partially supported by PRIN Project (Italy) 2022R537CS {\em ``$NO^3$ - Nodal Optimization, NOnlinear elliptic equations, NOnlocal geometric problems, with a focus on regularity''.}}

\maketitle

\section{Introduction}

In this paper we deal with existence and uniqueness of weak solutions to a quasilinear anisotropic elliptic problem which involves a singular and a $p$-sublinear nonlinearity
\begin{equation}
	\label{PbMain}
	\begin{cases}
	\displaystyle -\Delta_p^H u=\frac{f(x)}{u^\gamma} + h(x)u^\theta & \text { in } \Omega, \\
	u>0 & \text { in } \Omega, \\
	u=0 & \text { on } \partial \Omega,
	\end{cases}
\end{equation}
where $\Omega \subset \mathbb{R}^N$ ($N\ge 2$) is an open bounded set with smooth boundary and
\begin{equation}\label{eq:def_Operator}
\Delta_p^H u :=\operatorname{div}\left(H^{p-1}(\nabla u) \nabla H(\nabla u)\right)
\end{equation}
is the so-called anisotropic Finsler $p$-laplacian, with $p>1$ and $H:\mathbb{R}^{N}\to [0,+\infty)$ is a function which introduces possible anisotropies. Finally, $\gamma>0$, $0<\theta <p-1$, $f$ is a nonnegative function (not identically zero) which can be merely integrable in $\Omega$ and $h$ is a bounded and non-negative function on $\Omega$.
Clearly, if $H(\xi)=|\xi|$ the operator $\Delta_p^H(\cdot)$ reduces to the classical (isotropic) $p$-laplacian. 

In the applications it often happens to face problems which naturally exhibits anisotropic behaviours; this is for example the case of diffusions in heterogeneous materials or crystalline structures (see e.g. \cite{CaHo,Gu,DeMaMiNe}) and anisotropic curvature flows (see e.g. \cite{BePa, DePGX,Cabezas}), but it appears in image processing as well, (see e.g. \cite{PeMa,EsOs}). From a purely geometrical point of view, a quite natural setting allowing for anisotropies is the one of Finsler geometry (see e.g. \cite{Bao}): roughly speaking, differently from the Riemannian case, one chooses a norm instead of a inner product to measure objects living on the tangent bundle. Once this choice is made, it is therefore possible to properly define an analogous of the Laplace-Beltrami operator as in \eqref{eq:def_Operator}. \\
Our perspective here is to require $H$ to satisfy \eqref{Hh1}, \eqref{Hh2} and \eqref{Hh3} listed below in Section \ref{sec:Preliminaries}. 
This approach is closer to the recent studies of PDEs driven by $\Delta_p^H(\cdot)$. A partial list of contributions could mention regularity results (see e.g. \cite{CFV14,CSR,AnCiFa,AnCiCiFaMa}), overdetermined problems (see e.g. \cite{CiSal,WangXia, Bi-Ci-Sa,BiCi,CiSal2}) and more general nonlinear problems (see e.g. \cite{CFV16,CFR,FaLi1, DiPoVa,FaScVu,EMSV,BERV}).
Due to our interest in \eqref{PbMain}, let us finally mention a few results dealing with singular problems (see e.g. \cite{DePG2,EST,MonSciuTro}).

\medskip

Let us now go back to the model problem \eqref{PbMain}. We are interested in the following issues:
\begin{itemize}
	\item[i)] uniqueness of weak solutions, i.e. $u \in W^{1,p}_{0}(\Omega)$ which weakly solves \eqref{PbMain};
	\item[ii)] existence of weak solutions to \eqref{PbMain} with $0 \leq h \in L^{\infty}(\Omega)$ and under different assumptions on $f$.
\end{itemize}

We postpone to Section \ref{sec:Uniqueness} (see Definition \ref{defweak}) and Section \ref{sec:Existence} (see Definition \ref{defweakmod}) for the precise notion of weak solution used in i) and ii) respectively.

As long as i) is concerned, uniqueness of weak solutions has to be expected due to the strictly decreasing behaviour of the nonlinear part as well described e.g. in \cite{Brezis_Oswald}. For this reason, and only for what concerns uniqueness, we are therefore able to deal with the following more general problem:
\begin{equation}\label{intro:Pb_section_notion}
	\begin{cases}
		\displaystyle -\Delta_p^H u= F(x,u) & \text { in } \Omega, \\
		u=0 & \text { on } \partial \Omega.
	\end{cases}
\end{equation}
where $1<p<N$ and the function $F:\Omega\times (0,+\infty)\to [0,+\infty)$ is a Carath\'eodory function satisfying the following assumptions
\begin{itemize}
	\item the map $s \mapsto F(x,s)$ is continuous for almost every $x \in \Omega$;
	\item the function $x \mapsto F(x,s)$ is measurable for every fixed $s \in \mathbb{R}$.
\end{itemize}
\noindent Using a test function argument as in \cite{DurOl}, which in turn is inspired by the approach developed in \cite{Brezis_Oswald}, we are able to prove our first result, which reads as follows:
\begin{theorem}\label{intro:thm:uniqueness}
	Let $H$ satisfy \eqref{Hh1}-\eqref{Hh3} and let $1<p<N$. Moreover let $F:\Omega \times (0,+\infty) \to [0,+\infty)$ be a Carath\'{e}odory function satisfying
	\begin{equation}\label{intro:eq:Ipotesi_su_F}
		\dfrac{F(x,s)}{s^{p-1}} \quad \textrm{ is strictly decreasing w.r.t. } s \textrm{ for a.e. } x \in \Omega.
	\end{equation}
	Then, there exists at most one weak solution $u \in W^{1,p}_0(\Omega)$ to \eqref{intro:Pb_section_notion}.
\end{theorem}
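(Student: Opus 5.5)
We follow the Brezis--Oswald / D\'iaz--Saa strategy, adapted to weak solutions that need not be bounded away from zero, in the spirit of \cite{DurOl}. Let $u,v\in W^{1,p}_0(\Omega)$ be two weak solutions of \eqref{intro:Pb_section_notion}; both are positive in $\Omega$ (since $F$ is only defined on $(0,+\infty)$), and the weak formulation gives
\[
\int_\Omega H^{p-1}(\nabla u)\nabla H(\nabla u)\cdot\nabla\varphi=\int_\Omega F(x,u)\varphi
\]
for the admissible test functions, and likewise for $v$. We want to test the equation for $u$ with $\varphi_1=(u^p-v^p)^+/u^{p-1}$ and the equation for $v$ with $\varphi_2=(u^p-v^p)^+/v^{p-1}$, and then subtract. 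This is not directly admissible, since these functions may fail to lie in $W^{1,p}_0$ and $u,v$ vanish at $\partial\Omega$. We therefore regularize: for $\varepsilon>0$ we use $u_\varepsilon=u+\varepsilon$, $v_\varepsilon=v+\varepsilon$, and we also truncate, $T_k$, to keep everything in $W^{1,p}_0\cap L^\infty$. So we test with $\varphi_1^\varepsilon=\big(T_k(u)^p-T_k(v)^p\big)^+/u_\varepsilon^{p-1}$ and similarly with $v_\varepsilon$. These are bounded $W^{1,p}_0$ functions because $u_\varepsilon,v_\varepsilon\ge\varepsilon$. If the paper's notion of weak solution allows only $C_c^1$ tests, a density argument is needed here. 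The right-hand side is nonnegative, so Fatou/monotone tools are available.

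The key step is the anisotropic Picone/D\'iaz--Saa inequality. For positive $w_1,w_2$,
\[
H^{p-1}(\nabla w_1)\nabla H(\nabla w_1)\cdot\nabla\Big(\frac{w_1^p-w_2^p}{w_1^{p-1}}\Big)-H^{p-1}(\nabla w_2)\nabla H(\nabla w_2)\cdot\nabla\Big(\frac{w_1^p-w_2^p}{w_2^{p-1}}\Big)\ge0 .
\]
We prove it by expanding. Euler's identity $\nabla H(\xi)\cdot\xi=H(\xi)$ and $H(\nabla H(\xi))=1$ (dual-norm property from \eqref{Hh1}--\eqref{Hh3}) reduce it to the convexity of $\xi\mapsto H^p(\xi)$. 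This is the anisotropic Picone identity $H^p(\nabla w_2)\ge H^{p-1}(\nabla w_1)\nabla H(\nabla w_1)\cdot\nabla(w_2^p/w_1^{p-1})$. Applied with $w_1=u_\varepsilon$, $w_2=v_\varepsilon$ on the set $\{u>v\}$, the left side after subtracting the two tested equations is $\ge -o(1)$, with errors from $\varepsilon$ and the truncation. On the other side we get
\[
\int_{\{u>v\}}\Big(\frac{F(x,u)}{u_\varepsilon^{p-1}}-\frac{F(x,v)}{v_\varepsilon^{p-1}}\Big)\big(T_k(u)^p-T_k(v)^p\big).
\]

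The main obstacle is the passage to the limit $\varepsilon\to0$, $k\to\infty$, in both sides. The gradient side is handled by Picone nonnegativity plus Fatou, after rewriting the truncation cross terms, which vanish as $k\to\infty$ since $u,v\in W^{1,p}_0$. For the right-hand side, note $F(x,u)/u_\varepsilon^{p-1}\le F(x,u)/u^{p-1}$, and $F(x,u)\,T_k(u)^p/u_\varepsilon^{p-1}\le F(x,u)T_k(u)$, which is integrable since testing $u$'s equation with $T_k(u)$ gives $\int F(x,u)T_k(u)\le\int H^p(\nabla u)<\infty$. Dominated convergence then applies, and monotone convergence in $k$. In the limit we obtain
\[
0\le\int_{\{u>v\}}\Big(\frac{F(x,u)}{u^{p-1}}-\frac{F(x,v)}{v^{p-1}}\Big)(u^p-v^p)\,dx .
\]
By \eqref{intro:eq:Ipotesi_su_F} the integrand is strictly negative on $\{u>v\}$, so $|\{u>v\}|=0$. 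Exchanging the roles of $u$ and $v$ gives $u=v$ a.e.
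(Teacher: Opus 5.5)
Your argument is in substance the paper's proof. The paper, via the comparison principle (Proposition \ref{thm:Comparison_Principle}), also tests the two equations with $\varepsilon$-regularized, $T_k$-truncated D\'{\i}az--Saa quotients. It gets the Picone-type nonnegativity from the convexity inequalities \eqref{introeq:ineqStandard}--\eqref{intro:eq:ineqStandardBis} with $\hat C_p=0$, lets $\varepsilon\to0$ and then $k\to\infty$, and concludes by strict monotonicity. However, you assert one step rather than prove it, and the reason you give does not apply to your test functions.

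The paper shifts the numerator too, $\psi_i=T_k\big(((v_1+\varepsilon)^p-(v_2+\varepsilon)^p)^+\big)/(v_i+\varepsilon)^{p-1}$. On $A_{k,\varepsilon}$ these are exactly Picone quotients in $w_i=v_i+\varepsilon$, so the gradient side is pointwise $\ge 0$ with no $\varepsilon$-error at all. Your numerator $(T_k(u)^p-T_k(v)^p)^+$ does not match the denominators $u_\varepsilon^{p-1}$, $v_\varepsilon^{p-1}$, so ``Picone nonnegativity'' gives nothing directly. Comparing with the exact quotients instead produces terms such as $(p-1)\frac{u_\varepsilon^p-u^p}{v_\varepsilon^p}H^p(\nabla v)$, of order $u^{p-1}\varepsilon^{1-p}$ where $v\lesssim\varepsilon\ll u$, for which there is no evident $\varepsilon$-uniform majorant. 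So the claim that the left side is $\ge -o(1)$ is exactly the technical point, and it needs its own argument.

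The argument can be supplied. Use $\nabla H(\xi)\cdot\eta\le H(\eta)$, which follows from convexity plus Euler; note that it is the dual norm, not $H$, that equals $1$ at $\nabla H(\xi)$. Apply Young's inequality $pA^{p-1}B\le(p-1)A^p+B^p$ to the two cross terms. On $\{v<u<k\}$ this gives the pointwise bound $H^{p-1}(\nabla u)\nabla H(\nabla u)\cdot\nabla\varphi_1^\varepsilon-H^{p-1}(\nabla v)\nabla H(\nabla v)\cdot\nabla\varphi_2^\varepsilon\ge g(u/u_\varepsilon)H^p(\nabla u)+g(v/v_\varepsilon)H^p(\nabla v)$, where $g(r)=pr^{p-1}-(p-1)r^p-1\in[-1,0]$ and $g(r)\to0$ as $r\to1$. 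This error therefore vanishes as $\varepsilon\to0$ by dominated convergence. On $\{u\ge k>v\}$ the negative terms are bounded, uniformly in $\varepsilon$, by $\big[(p-1)H^p(\nabla u)+pH^{p-1}(\nabla u)H(\nabla v)\big]\chi_{\{u\ge k\}}$, which vanishes as $k\to\infty$.

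Two smaller points also need fixing.
\begin{itemize}
\item On the right-hand side, the term with $F(x,v)/v_\varepsilon^{p-1}$ is not dominated a priori. It converges monotonically as $\varepsilon\to0$, and its limit is finite because of the lower bound above. The paper uses Fatou on $r_{k,\varepsilon}^-$ for the same purpose (Lemma \ref{lem:limite_in_epsilon}).
\item Extending the test class to $W^{1,p}_0(\Omega)$ is not a plain density argument, since $F(x,u)$ is only in $L^1_{\rm loc}(\Omega)$. You first need $F(x,u)\psi\in L^1(\Omega)$ via Fatou, as in Lemma \ref{lem:extensiontest}.
\end{itemize}
With these additions your proof is complete.
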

\medskip
%
%

The situation is more delicate once we consider the problem of the existence of weak solutions as mentioned in ii). From now on, we will deal only with the model problem \eqref{PbMain}. We recall that the non-negative coefficient $h$ in front of the $p$-sublinear term will always assumed to be bounded, except for Theorem \ref{intro:thm:existencegammaminoredi1}. We refer to e.g. \cite{BoOr} for the case of unbounded $h$. With a nod to the nowadays classical Lazer-McKenna type results \cite{lazer}, we allow for an additional perturbative $p$-sublinear term but we keep our focus on the roles of both $\gamma$ and the summability of $f$. Just to give a preliminary flavor of the phenomena we are interested in, let us briefly recall the famous result in \cite{lazer}: consider the purely singular problem
\begin{equation*}
	\begin{cases}
	\displaystyle -\Delta u= f(x) u^{-\gamma} & \text { in } \Omega, \\
	u>0 & \text { in } \Omega,\\
	u=0 & \text { on } \partial \Omega,
\end{cases}
\end{equation*}
\noindent with $0< c \leq f \in C^{\alpha}(\overline{\Omega})$. Lazer and McKenna proved that there exists a unique classical solution which is also a weak solution if and only if $\gamma <3$. We refer to e.g. \cite{Sun,sz} for extensions of this result with more general $f$ and also with the addition of a sublinear term as we are considering. See also \cite{BoOr2, OPsurvey} for a more general discussion.

In this perspective, the first result related to ii) considers the case of $f \in L^{\infty}(\Omega)$ and it holds for all $\gamma>0$.

\begin{theorem}	\label{intro:thm:existence1}
	Let $H$ satisfy \eqref{Hh1}-\eqref{Hh3}, let $1<p<N$, let $0<c \le f\in L^\infty(\Omega)$ with $\gamma>0$, $0 \leq h \in L^\infty(\Omega)$, $0\le \theta<p-1$. Then there exists a solution $u \in W^{1,p}_0(\Omega)$ to \eqref{PbMain} if and only if $\gamma < 2 + \frac{1}{p-1}$.
\end{theorem}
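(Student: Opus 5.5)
Both directions reduce to comparing $u$ with the distance function $d(x)=\operatorname{dist}(x,\partial\Omega)$. The critical exponent comes from the one-dimensional profile: $w=d^{\beta}$ satisfies $-\Delta_p^H w\sim d^{(\beta-1)(p-1)-1}$ near $\partial\Omega$. Matching this with $d^{-\gamma\beta}$ forces $\beta=\frac{p}{p-1+\gamma}$ when $\gamma>1$. Then $|\nabla u|^p\sim d^{(\beta-1)p}$ is integrable near the boundary if and only if $(\beta-1)p>-1$. A short computation shows this is equivalent to $\gamma<2+\frac1{p-1}$, i.e. $\gamma(p-1)<2p-1$. So the plan is: prove the two-sided estimate $u\approx d^{\beta}$ (and $u\approx d$, or $d\log$-type behaviour, for $\gamma\le1$), and then read off $W^{1,p}_0$ membership.

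\emph{Sufficiency.} I would work with the approximating problems $-\Delta_p^H u_n=\frac{T_n(f)}{(u_n+1/n)^\gamma}+h\,T_n(u_n)^\theta$. These have bounded solutions by Schauder fixed point or monotone operator theory. The sequence $u_n$ is increasing by a weak comparison principle, which uses the structure of Theorem \ref{intro:thm:uniqueness}. It is uniformly bounded in $L^\infty$ because $\theta<p-1$ and $f,h$ are bounded. It is bounded below on compact sets by $u_1>0$, via the strong minimum principle for $\Delta_p^H$. The key estimate is a uniform barrier from above near $\partial\Omega$. For $\gamma>1$ I would build a supersolution $\overline w=C\,d^{\beta}$ in a tubular neighbourhood, using the regularity of $d$ and the homogeneity $H(t\xi)=|t|H(\xi)$ from \eqref{Hh1}-\eqref{Hh3}. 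This gives $-\Delta_p^H\overline w\ge c\,d^{-\gamma\beta}\ge \|f\|_\infty\overline w^{-\gamma}+\|h\|_\infty\overline w^\theta$ there. A subsolution $\underline w=c\,d^{\beta}$ is built analogously, using $f\ge c$. Comparison then gives $c\,d^\beta\le u_n\le C d^\beta$.

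For the gradient bound I would test the equation with $u_n$, which gives $\int H(\nabla u_n)^p\le\int f u_n^{1-\gamma}+\int h u_n^{1+\theta}$. The first term is controlled by $\int d^{\beta(1-\gamma)}$. One checks that $\beta(1-\gamma)>-1$ is exactly the same condition, $\gamma<2+\frac1{p-1}$. Hence $u_n$ is bounded in $W^{1,p}_0$, and passing to the limit in the weak formulation is standard, since $u_n\ge c_K>0$ on compact sets.

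\emph{Necessity.} Assume a solution $u\in W^{1,p}_0(\Omega)$ exists with $\gamma\ge 2+\frac1{p-1}$. By weak comparison with the subsolution $\underline w=c\,d^\beta$, dropping the $h$ term since it is nonnegative, we get $u\ge c\,d^{\beta}$. I would then test with $u$ itself, possibly via truncations $T_k(u)$ or $(u-\varepsilon)^+$ to make it admissible, and use Fatou. This gives $\int f u^{1-\gamma}\le\int H(\nabla u)^p<\infty$. But $f\ge c$ and $u\le C d^{\beta}$, with the upper bound obtained as in the sufficiency part, which should not require the restriction on $\gamma$. Hence $\int d^{\beta(1-\gamma)}=\infty$, a contradiction. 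Alternatively one can avoid the upper bound by a Hardy-inequality argument in the spirit of Lazer–McKenna. Since $u\in W^{1,p}_0$ and $u\ge c\,d^\beta$ with $\beta(p-1)$ small, there should be a direct incompatibility. I would try the upper-barrier route first.

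\emph{Main obstacle.} I expect the hardest step to be constructing the anisotropic barriers $c\,d^\beta$ and $C\,d^\beta$ near the boundary. One has to compute $\Delta_p^H(d^\beta)$ using $\nabla d$ and $D^2d$, together with the smoothness and uniform ellipticity of $H$ away from the origin. The difficulty is that $H(\nabla d)$ is not constant, so the lower-order term involving $\Delta_p^H d$ and $\nabla H$ must be shown to be dominated by the leading term $d^{(\beta-1)(p-1)-1}$ near $\partial\Omega$. An alternative is to use the anisotropic distance $d_{H_0}$, for which $H(\nabla d_{H_0})=1$. The borderline cases $\gamma=1$ (with logarithmic corrections) and equality in the threshold need separate but routine care.
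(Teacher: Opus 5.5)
The genuine gap is in the sufficiency part, in the range $2\le\gamma<2+\frac{1}{p-1}$, which is the range where the statement has content. You claim that comparison yields $c\,d^\beta\le u_n$ for the regularized problems. However, $c\,d^\beta$ is not a subsolution of the $n$-th problem. Its right-hand side is bounded by $n^\gamma\|f\|_{L^\infty(\Omega)}+n^\theta\|h\|_{L^\infty(\Omega)}$, whereas $-\Delta_p^H(c\,d^\beta)\sim d^{-\beta\gamma}\to+\infty$ at $\partial\Omega$. In fact the claim is false. Because the right-hand side is bounded, the global $C^{1,\sigma}$ regularity of Remark~\ref{rmk:regularity} gives $u_n\le C_n d$, which cannot coexist with $u_n\ge c\,d^\beta$ since $\beta<1$. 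The energy estimate then collapses. After replacing $\frac{u_n}{(u_n+1/n)^\gamma}$ by $u_n^{1-\gamma}$, your right-hand side satisfies $\int_\Omega f u_n^{1-\gamma}\ge c\,C_n^{1-\gamma}\int_\Omega d^{1-\gamma}=+\infty$ for every $n$ as soon as $\gamma\ge 2$. So no uniform $W^{1,p}_0$ bound comes out. For $\gamma\le 1$ no barrier is needed, and for $1<\gamma<2$ the bound $u_n\ge u_1\ge c\,d$ (monotonicity plus Hopf) would suffice.

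The missing idea is to keep the regularization and use an $n$-dependent barrier:
\begin{itemize}
\item Estimate $\frac{u_n}{(u_n+1/n)^\gamma}\le (u_n+\frac1n)^{1-\gamma}$.
\item Compare $u_n$ with $\underline w_n:=c\,[(\phi_1+\varepsilon_n)^\beta-\varepsilon_n^\beta]\in W^{1,p}_0(\Omega)$, where $\varepsilon_n^\beta=\frac1n$. A direct computation gives $-\Delta_p^H\underline w_n\le C c^{p-1}(\phi_1+\varepsilon_n)^{-\beta\gamma}$, while $\underline w_n+\frac1n\le (1+c)(\phi_1+\varepsilon_n)^\beta$.
\item Hence $\underline w_n$ is a subsolution of $-\Delta_p^H v=T_n(f)(v+\frac1n)^{-\gamma}$ for $c$ small, independent of $n$. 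This problem has a nonincreasing nonlinearity, and $u_n$ is a supersolution of it.
\item It follows that $u_n+\frac1n\ge c\,\phi_1^\beta$, and so $\int_\Omega H^p(\nabla u_n)\le C\int_\Omega \phi_1^{\beta(1-\gamma)}+C$. By \eqref{phi_integrabilita} this is finite exactly when $\gamma<2+\frac1{p-1}$.
\end{itemize}
The paper avoids barriers at this point. Theorem~\ref{thmexistence1} reduces existence to exhibiting one $u_0\in W^{1,p}_0(\Omega)$ with $\int_\Omega f u_0^{1-\gamma}<\infty$, and $u_0=\phi_1^t$ with $\frac{p-1}{p}<t<\frac{1}{\gamma-1}$ works.

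Your necessity argument follows the paper: an upper bound $u\le C\phi_1^\beta$ forces $\int_\Omega f u^{1-\gamma}=+\infty$, contradicting $\int_\Omega fu^{1-\gamma}\le\int_\Omega H^p(\nabla u)$. But the upper bound is not available ``as in the sufficiency part''. For $\gamma\ge 2+\frac1{p-1}$ one has $\beta\le\frac{p-1}{p}$, so $C\,d^\beta\notin W^{1,p}(\Omega)$, and $u$ is not known to be bounded. Hence neither Proposition~\ref{thm:Comparison_Principle} nor Theorem~\ref{thm:estbelowabove} applies as stated; the latter is stated only for bounded solutions and $\gamma<2+\frac1{p-1}$. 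The paper's appeal to Theorem~\ref{thm:estbelowabove} there carries the same caveat.

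Your Hardy alternative repairs this cleanly and needs only a lower bound. The function $c\,[(\phi_1+\varepsilon)^\beta-\varepsilon^\beta]\in W^{1,p}_0(\Omega)$ is a subsolution of $-\Delta_p^H v=f v^{-\gamma}$ for $c$ small, independent of $\varepsilon$. Since $hu^\theta\ge 0$, $u$ is a supersolution of the same problem. Hence $u\ge c\,\phi_1^\beta\ge c'\,d^\beta$ by Hopf. Then Hardy's inequality $\int_\Omega |u/d|^p\le C\int_\Omega|\nabla u|^p<\infty$ contradicts $\int_\Omega d^{(\beta-1)p}=+\infty$ for $\beta\le\frac{p-1}{p}$.

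Finally, the step you flag as the main obstacle is harmless. One has $-\Delta_p^H(C\,d^\beta)=(C\beta)^{p-1}\bigl[(1-\beta)(p-1)H^p(\nabla d)\,d^{-\beta\gamma}-d^{1-\beta\gamma}\Delta_p^H d\bigr]$, and $\Delta_p^H d$ is bounded near $\partial\Omega$ because $|\nabla d|=1$ there. So the last term is of lower order. Working with $\phi_1$, as the paper does, turns it into the nonnegative term $\lambda_1\phi_1^p$.
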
  

The second result holds even for an unbounded $f$ but with $\gamma >1$. It looks once again as a necessary and sufficient condition on $\gamma$, but this time as to be interpreted as follows: there exists a function $f\in L^m(\Omega)$ for which no solution in $W^{1,p}_0(\Omega)$ is expected. The precise statement reads as follow:

\begin{theorem}	\label{intro:thm:existence2}
	Let $H$ satisfy \eqref{Hh1}-\eqref{Hh3}, let $1<p<N$, $0 \leq h \in L^\infty(\Omega)$, $0\le \theta<p-1$ and let $\gamma>1$. Then there exists a solution $u \in W^{1,p}_0(\Omega)$ for every  $0< f \in L^m(\Omega)$ ($m>1$) to \eqref{PbMain} if and only if $\gamma < 2 + \frac{1}{p-1} - \frac{p}{(p-1)m}$.
\end{theorem}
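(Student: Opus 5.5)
We prove the two implications separately. In both directions the key object is the distance function $d(x)=\operatorname{dist}(x,\partial\Omega)$, together with one-dimensional (radial or near-boundary) comparison functions adapted to the anisotropy through \eqref{Hh1}-\eqref{Hh3}. Since $H$ is comparable to the Euclidean norm, $H$ has $p$-homogeneous growth. Moreover, for $w=d^\beta$ near the boundary, $-\Delta_p^H w$ behaves like $c\, d^{(\beta-1)(p-1)-1}$, up to constants depending only on $H$ and the smoothness of $\partial\Omega$. This replaces the explicit Laplacian computations of Lazer--McKenna.

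\emph{Sufficiency.} Assume $\gamma<2+\frac{1}{p-1}-\frac{p}{(p-1)m}$. Note that the right-hand side is at most $2+\frac1{p-1}$, and that the $W^{1,p}_0$ membership is exactly where $m$ enters.

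We approximate: let $u_n$ solve
\[-\Delta_p^H u_n=\frac{T_n(f)}{(u_n+1/n)^\gamma}+h\,T_n(u_n)^\theta,\]
which exists by Schauder/Leray--Lions arguments. The sequence $u_n$ is increasing in $n$, by a comparison principle obtained from the monotonicity used in Theorem \ref{intro:thm:uniqueness}. It is also locally bounded below by $u_1\ge c_K>0$ on compact sets $K$, via the strong maximum principle.

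The crucial step is a $W^{1,p}$ bound. Testing with $u_n^{\gamma}$ gives control of $\int |\nabla u_n^{(\gamma+p-1)/p}|^p$ only, so instead we need a sharp lower barrier. We construct a subsolution $\underline w=c\,\phi_1^{\beta}$ with $\beta=\frac{p}{\gamma+p-1}$, where $\phi_1$ is the first eigenfunction of $-\Delta_p^H$, so that $\phi_1\sim d$. Then $u_n\ge c\,d^{\beta}$ uniformly in $n$ on $\{f\ge c\}$. For general $f\in L^m$ this fails, so we argue as follows.

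We test the equation with $u_n$ itself:
\[\int H(\nabla u_n)^p\le \int f u_n^{1-\gamma}+\|h\|_\infty\int u_n^{1+\theta}.\]
The $h$ term is absorbed by Sobolev and Young inequalities, since $1+\theta<p$. For the singular term we need a lower bound $u_n\ge c\,d^{\beta}$ with $\beta$ as above. This holds because $f>0$ and the solution of the problem with datum $f$ dominates the solution with datum $\min(f,1)$... here positivity of $f$ alone does not give a uniform constant. So we instead use Hölder's inequality together with Hardy's inequality:
\[\int f u^{1-\gamma}=\int f\,\frac{u^{1-\gamma}}{d^{s}}\,d^{s},\]
combined with an upper bound $u_n\le C d^{\beta}$ from a supersolution. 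I expect this step, namely the uniform $W^{1,p}$ estimate with $f\in L^m$, to be the main obstacle. The plan is to use $\int f d^{\beta(1-\gamma)}\le \|f\|_m\|d^{\beta(1-\gamma)}\|_{m'}$. This is finite exactly when $\beta(\gamma-1)m'<1$, i.e.
\[\frac{p(\gamma-1)}{\gamma+p-1}<\frac{1}{m'}.\]
This matches the threshold after algebra, using the sharper exponent obtained by testing with $u_n^{\delta}$-type powers. Passing to the limit by monotone convergence and the a.e. convergence of gradients (Boccardo--Murat) then yields a weak solution.

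\emph{Necessity.} Suppose $\gamma\ge 2+\frac1{p-1}-\frac{p}{(p-1)m}$. We choose $f(x)=d(x)^{-a}$ with $a<1/m$ close to $1/m$, so that $f\in L^m$. Comparison with the solution of the pure singular problem gives $u\ge c\,d^{\beta'}$ with $\beta'=\frac{p-a}{\gamma+p-1}$, where we use the dropped $h$ term and comparison from Theorem \ref{intro:thm:uniqueness}'s argument.

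If $u\in W^{1,p}_0$, testing with $u$ yields $\int f u^{1-\gamma}<\infty$. On the other hand, an upper bound $u\le C d^{\beta'}$ obtained via a supersolution in a boundary strip, now including $h u^\theta$, which is lower order, gives
\[\int d^{-a+\beta'(1-\gamma)}<\infty.\]
This forces $a+\beta'(\gamma-1)<1$. Letting $a\uparrow 1/m$, this contradicts the assumed inequality on $\gamma$.
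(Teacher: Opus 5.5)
Your sufficiency argument has a genuine gap, and it sits exactly at the step you flag as the main obstacle. For a general $0<f\in L^m(\Omega)$ there is no uniform lower barrier for $u_n$. The substitute you invoke, an upper bound $u_n\le C d^{\beta}$, points the wrong way: since $1-\gamma<0$ it bounds $u_n^{1-\gamma}$ from \emph{below}, so it cannot control $\int_\Omega f u_n^{1-\gamma}$. Moreover, even with $u_n\ge c\,d^{\beta}$ and $\beta=\frac{p}{\gamma+p-1}$, the condition $\beta(\gamma-1)m'<1$ is equivalent to $\gamma-1<\frac{p(m-1)}{(p-1)m+1}$. This is strictly more restrictive than the claimed $\gamma-1<\frac{p(m-1)}{(p-1)m}$: for $p=m=2$ it gives $\gamma<5/3$ instead of $\gamma<2$. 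So it does not ``match the threshold after algebra'', and the appeal to $u_n^{\delta}$-type test functions does not repair it.

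The missing idea is that the energy bound must not be tied to the boundary behaviour of the solution at all. The paper gets it from Theorem \ref{thmexistence1}: minimizing $J$ on $\mathcal N$ (via Ekeland) produces a solution as soon as $\int_\Omega f u_0^{1-\gamma}<\infty$ for \emph{some} $u_0\in W^{1,p}_0(\Omega)$. One may take $u_0=\phi_1^t$ with $t$ just above $\frac{p-1}{p}$, the critical exponent for membership in $W^{1,p}_0(\Omega)$, which lies below $\beta$. Then $\int_\Omega f\phi_1^{t(1-\gamma)}\le\|f\|_{L^m(\Omega)}\|\phi_1^{t(1-\gamma)}\|_{L^{m'}(\Omega)}$. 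By \eqref{phi_integrabilita} the right-hand side is finite if and only if $t(\gamma-1)m'<1$, and such a $t>\frac{p-1}{p}$ exists precisely when $\gamma<2+\frac1{p-1}-\frac{p}{(p-1)m}$. If you want to keep your approximation scheme, the uniform bound has to come from an energy comparison between $u_n$ and a multiple of $u_0$ (for instance, using that $u_n$ minimizes the truncated functional), not from a barrier.

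For necessity you take a different route from the paper, which only transfers the construction of Theorem 5.7 of the survey it cites. Your choice $f=d^{-a}$, $a<\frac1m$, with an upper barrier $C\phi_1^{\beta'}$, $\beta'=\frac{p-a}{\gamma+p-1}$, has the right exponent bookkeeping: $a+\beta'(\gamma-1)<1$ if and only if $\gamma<2+\frac1{p-1}-\frac{ap}{p-1}$. The lower barrier you mention is never actually used. But pure powers only give nonexistence for $\gamma$ \emph{strictly} above the threshold. At $\gamma=2+\frac1{p-1}-\frac{p}{(p-1)m}$ you would need $a\ge\frac1m$, which is incompatible with $d^{-a}\in L^m(\Omega)$. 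So letting $a\uparrow\frac1m$ does not cover the equality case included in the statement. There you need a logarithmically corrected datum, e.g.\ $f=d^{-1/m}|\log d|^{-b}$ near $\partial\Omega$ with $bm>1$, together with a barrier carrying a matching power of $|\log d|$.

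There is a further issue in the nonexistence regime: there $\beta'\le\frac{p-1}{p}$, so $C\phi_1^{\beta'}\notin W^{1,p}_0(\Omega)$. Proposition \ref{thm:Comparison_Principle}, whose test functions involve the gradients of both functions, therefore does not apply as stated. You need a comparison argument that tolerates a supersolution lying only in $W^{1,p}_{\rm loc}(\Omega)$ and that also accounts for the term $hu^\theta$.
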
  
\begin{remark}
	Let us explicitly stress that, as $m \to +\infty$, the previous theorem gives the threshold $\gamma < 2 + \frac{1}{p-1}$ found in Theorem \ref{intro:thm:existence1} and which coincides with the one of \cite{lazer} in case of the Laplacian operator. 
\end{remark} 

\medskip

Let us now provide a brief description of the techniques involved and of the related results needed along the path to prove both Theorem \ref{intro:thm:existence1} and Theorem \ref{intro:thm:existence2}.

\begin{itemize}
	\item We consider first the case of $0<\gamma \leq 1$ and $0 \leq f \in L^{(\frac{p^*}{1-\gamma})'}(\Omega)$ with $\left(\frac{p^*}{1-\gamma}\right)' = 1$ if $\gamma =1$ and we show the existence of a weak solution as limit of solutions of an auxiliary truncated problem \eqref{pbn}. This is the content of Theorem \ref{intro:thm:existencegammaminoredi1}. We stress that this immediately provides the existence of a solution in Theorem \ref{intro:thm:existence1} in the range $0<\gamma \leq 1$.
	\item We provide optimal controls from above and below for every {\it bounded} weak solution to \eqref{PbMain} in terms of suitable powers of the first eigenfunction. The proof is based on a sub-super-solution method and uses the regularity up to the boundary (see Remark \ref{rmk:regularity}). This is the content of Theorem \ref{thm:estbelowabove} which partially improves \cite{MonSciuTro}.
	\item For $\gamma >1$ and $0<f\in L^{1}(\Omega)$ we provide a necessary and sufficient condition (sometimes called {\it compatibility condition} in the literature) for the existence of a weak solution to \eqref{PbMain}. This consists in requiring the existence of a function $u_{0} \in W^{1,p}_{0}(\Omega)$ such that 
	\begin{equation*}
		\int_{\Omega}f |u_0|^{1-\gamma} < +\infty.
	\end{equation*}
	This is the content of Theorem \ref{thmexistence1} and it is mainly based on \cite{Sun,sz}. We stress that the existence part of this result is based on solving a constrained minimization problem over a suitable set which is not the classical Nehari manifold, and this is due to the singular part of the nonlinearity. We also stress that along the proof we need an anisotropic version of the classical {\it convergence of the gradients} (see e.g. \cite{BoMu92}) which we prove in Proposition \ref{prop:convgrad}.
	\item We prove Theorem \ref{intro:thm:existence1} as follows: for the existence part (in the range $1<\gamma<2+\tfrac{1}{p+1}$) we exploit both Theorem \ref{thmexistence1} (choosing $u_0$ as a suitable power of the first eigenfunction) and the estimates in Theorem \ref{thm:estbelowabove}. The optimality of the upper bound for $\gamma$ is proved once again exploiting Theorem \ref{thm:estbelowabove} and actually shows that there cannot exist a weak solution to \eqref{PbMain} for $\gamma \geq 2+\tfrac{1}{p+1}$.
	\item The proof of Theorem \ref{intro:thm:existence2} follows the same path as before as long as the use of Theorem \ref{thmexistence1} and Theorem \ref{thm:estbelowabove} is concerned: indeed we choose once again $u_0$ to be a suitable power of the first eigenfunction. As already mentioned before, the optimality has to be understood in a slightly different way: we exploit a construction already performed in the case of the laplacian (i.e. $H(\xi)=|\xi|, p=2, h=0$), see e.g. \cite[Theorem 5.7]{OPsurvey}, which shows the existence of a function $f$ for which no weak solution exists with  $\gamma \geq 2+\tfrac{1}{p+1}$.
\end{itemize}
\medskip
Let us spend a final comment on \eqref{Hh3} satisfied by $H$. As explained in Remark \ref{rmk:regularity}, this is the crucial assumption that permits to inherit the classical boundary regularity for the $p$-laplacian. This is at the moment unavoidable for us since we make also use of the Hopf Lemma proved in \cite{CSR}.

\medskip

The paper is organized as follows: in Section \ref{sec:Preliminaries} we introduce the notation and we collect all the preliminary results needed in what follows, included Proposition \ref{prop:convgrad}. In Section \ref{sec:Uniqueness} we first provide the definition of solution to \eqref{intro:Pb_section_notion} and then we prove Theorem \ref{intro:thm:uniqueness}. In Section \ref{sec:Existence} we give the precise notion of solution we are considering for \eqref{PbMain} and we prove Theorem \ref{thmexistence1}, 

\section{Notation and preliminaries}\label{sec:Preliminaries}

In the entire paper $\Omega \subset \mathbb{R}^{N}$ is an open and bounded set with smooth enough boundary $\partial \Omega$. For any set $E$, $|E|$ denotes its the $N$-dimensional Lebesgue measure.
For $r \in (1,+\infty)$, we denote by $r':= \tfrac{r}{r-1}$ the H\"{o}lder conjugate exponent of $r$.

Given any function $v$, we denote the positive and negative part of $v$ by 
\begin{equation*}\label{eq:def_positive_negative_part}
	v^{+}:= \max\{v,0\} \quad \textrm{ and } \quad v^{-}:= -\min\{v,0\},
\end{equation*}
and, for every fixed $k>0$, the truncation function $T_k:\mathbb{R}\to \mathbb{R}$ defined as
\begin{equation*}\label{eq:def_Tk}
	T_{k}(s):= \max \{ -k, \min\{s,k\}\},\quad s \in \mathbb{R}.
\end{equation*}	
As $\Omega$ is smooth, in order to avoid technicalities, we denote by $d(x)$ a suitable positive smooth regularization of $\mathrm{dist}(x,\partial\Omega)$ which agrees with it in a neighbourhood of $\partial\Omega$.
 Let us also stress that, along the paper, $C$ (with subscript in some cases) denotes a generic numerical constant allowed to vary from line to line and whose dependence on other quantities is not relevant.

For the sake of simplicity, if no ambiguity is possible, we use the notation:
$$\int_\Omega f:= \int_\Omega f(x) \ dx.$$

\smallskip 

Now we set some properties and geometrical tools about the anisotropic elliptic operator which are fundamental for the rest of the paper.

\smallskip

In the entire work $H: \R^N \rightarrow \R$ is a function satisfying:
\begin{equation}\label{Hh1}
H \in C_{\text {loc }}^2(\mathbb{R}^N \backslash\{0\}) \text{ is even and such that $H(\xi)>0$ for all $\xi \in \mathbb{R}^N \backslash\{0\}$},
\end{equation}
\begin{equation}\label{Hh2}
H(t \xi)=t H(\xi) \text{ for all } \xi \in \mathbb{R}^N \backslash\{0\}, \text{ for all } t>0,
\end{equation}
and 
\begin{equation}\label{Hh3}
\exists \lambda>0 \text{ such that } D^2 H(\xi) \zeta \cdot \zeta \geq \lambda|\zeta|^2, \text{ for all } \xi \in \partial \mathcal{B}_1^H \text{ and for all } \zeta \in \nabla H(\xi)^{\perp},
\end{equation}
where $\mathcal{B}_1^H:=\left\{\xi \in \mathbb{R}^N: H(\xi)<1\right\}$.

Let us explicitly stress that \eqref{Hh3} means that $H$ is uniformly elliptic, i.e.~$\mathcal B_1^H$ is uniformly convex.

Now we recall some useful properties of the Finsler norm, for which we refer to  \cite{CFV14, CFV16}.
First observe that, if $H: \R^N \rightarrow \R$ satisfies \eqref{Hh1}-\eqref{Hh3}, then one can show that
	\begin{equation}\label{controlloH}
	\exists \alpha, \beta >0: \alpha |\xi|\leq H(\xi)\leq \beta |\xi|,\quad\forall\, \xi\in\R^N;
\end{equation}
Moreover, as $H$ is $1$-homogeneous, it follows from the Euler Theorem that
\begin{equation}\label{eq:eulero}
	\nabla H(\xi) \cdot \xi = H(\xi), \qquad\forall\, \xi \in \R^N\setminus \{0\}.
\end{equation}
 Furthermore $\nabla H$ is $0$-homogeneous so that
\begin{equation}\label{eq:grad0omog}
	\nabla H(t\xi)=\hbox{sign}(t)\nabla  H(\xi), \qquad\forall\,\xi \in \R^N\setminus \{0\},\, \forall t\neq 0.
\end{equation}
Let also stress that, using \eqref{eq:grad0omog}, there exists a positive constant $K>0$ such that 
\begin{equation}\label{eq:gradHlimitato}
	|\nabla H(\xi)| \le  K, \qquad \forall \xi\in\R^N\setminus \{0\}.
\end{equation}

Now, we recall some vectorial inequalities, whose proofs can be found in \cite{CSR, EMSV}.
\begin{proposition} \label{prop:Lindqvist} 
	Let $H$ satisfy \eqref{Hh1}-\eqref{Hh3}. Then, for any $p>1$ and  $\eta, \eta' \in \R^N$ such that $|\eta|+|\eta'|>0$, it holds
	\begin{equation}\label{eq:boundbelow}
		[H^{p-1}(\eta)\nabla H(\eta)-H^{p-1}(\eta ')\nabla H(\eta ')] \cdot (\eta-\eta') \ge \tilde C_p(|\eta|+|\eta'|)^{p-2}|\eta-\eta'|^2,
	\end{equation} 
	and
	\begin{equation}\label{eq:boundabove}
		|H^{p-1}(\eta)\nabla H(\eta)-H^{p-1}(\eta ')\nabla H(\eta ')|\le \tilde C_p(|\eta|+|\eta'|)^{p-2}|\eta-\eta'|.
	\end{equation}
	Moreover, for any $p \geq 2$ it holds the following inequality
	\begin{equation} \label{introeq:ineqStandard}
		H^p(\eta) \geq H^p(\eta') + p H^{p-1}(\eta')  \nabla H(\eta') \cdot (\eta -\eta') + \hat C_p H^p(\eta - \eta'), 
	\end{equation}
	for any $\eta, \eta' \in \R^N$. Furthermore, if $1<p<2$ we have that
	\begin{equation} \label{intro:eq:ineqStandardBis}
		H^p(\eta) \geq H^p(\eta') + p H^{p-1}(\eta')  \nabla H (\eta') \cdot (\eta-\eta') + \hat C_p [H(\eta) + H(\eta')]^{p-2}H^2(\eta - \eta'),
	\end{equation}
	for any $\eta, \eta' \in \R^N$ such that $|\eta|+|\eta'|>0$, where $\hat C_p, \tilde C_p$ are nonnegative constants.
\end{proposition}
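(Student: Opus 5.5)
Since $H^{p-1}(\xi)\nabla H(\xi)=\frac1p\nabla(H^p)(\xi)$, I will read every inequality in Proposition \ref{prop:Lindqvist} as a statement about the convex, $p$-homogeneous function $G:=\frac1p H^p$ and its gradient map $a(\xi):=\nabla G(\xi)$. This map is $(p-1)$-homogeneous and odd, and by \eqref{Hh1} it is $C^1$ away from the origin. The plan is to reduce everything to the Hessian of $G$ together with two-sided bounds on it, and then argue as in the classical Lindqvist inequalities for the $p$-Laplacian. We write $\tilde C_p$ for the constants in \eqref{eq:boundbelow} and \eqref{eq:boundabove}; they are different constants for the two estimates, just as the statement implicitly allows.

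\textbf{Step 1: ellipticity of $D^2G$.} Compute
\[
D^2G(\xi)=H^{p-2}(\xi)\bigl[(p-1)\nabla H\otimes\nabla H+H\,D^2H\bigr](\xi).
\]
Both pieces are homogeneous of degree $p-2$, so it is enough to work on $\partial\mathcal B_1^H$ and to split $\zeta=t\xi+\zeta^\perp$ with $\zeta^\perp\in\nabla H(\xi)^\perp$. By \eqref{eq:eulero} we have $D^2H(\xi)\xi=0$, and since $\nabla H(\xi)\cdot\xi=1$ this decomposition is direct. It gives
\[
D^2G(\xi)\zeta\cdot\zeta=(p-1)t^2+D^2H(\xi)\zeta^\perp\cdot\zeta^\perp\ge (p-1)t^2+\lambda|\zeta^\perp|^2\ge c|\zeta|^2,
\]
where we used \eqref{Hh3} and the compactness of $\partial\mathcal B_1^H$. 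The upper bound $|D^2G|\le C$ on the same set follows from $H\in C^2$. Rescaling and using \eqref{controlloH}, we get for all $\xi\neq0$
\[
c|\xi|^{p-2}|\zeta|^2\le D^2G(\xi)\zeta\cdot\zeta\le C|\xi|^{p-2}|\zeta|^2 .
\]

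\textbf{Step 2: the monotonicity estimates \eqref{eq:boundbelow} and \eqref{eq:boundabove}.} Set $\xi_s=\eta'+s(\eta-\eta')$. Then
\[
a(\eta)-a(\eta')=\int_0^1 D^2G(\xi_s)(\eta-\eta')\,ds.
\]
If the segment passes through $0$, the integrand has an integrable singularity only when $p<2$, since $|\xi_s|^{p-2}$ is integrable in $s$ for $p>1$; so the formula survives by approximation. Pairing with $\eta-\eta'$ and using Step 1 yields both bounds, with the factor $\int_0^1|\xi_s|^{p-2}ds$ in place of $(|\eta|+|\eta'|)^{p-2}$. The main technical point is the elementary estimate that this integral is comparable to $(|\eta|+|\eta'|)^{p-2}$:
\begin{itemize}
\item For $p\ge2$ the upper bound is trivial. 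For the lower bound, $|\xi_s|\ge\frac14(|\eta|+|\eta'|)$ on a subinterval of length at least $1/4$, which one sees by looking near the endpoint of larger norm.
\item For $1<p<2$ the roles of the two directions are reversed, and $\int_0^1|\xi_s|^{p-2}ds\le C_p(|\eta|+|\eta'|)^{p-2}$ follows from the standard one-dimensional computation of $\int_0^1|\alpha+s\beta|^{p-2}ds$ after projecting orthogonally to $\eta-\eta'$.
\end{itemize}
These are exactly the classical Lindqvist lemmas, and I will cite them for the scalar integral bound.

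\textbf{Step 3: the convexity inequalities \eqref{introeq:ineqStandard} and \eqref{intro:eq:ineqStandardBis}.} Write Taylor's formula with integral remainder:
\[
G(\eta)-G(\eta')-a(\eta')\cdot(\eta-\eta')=\int_0^1(1-s)\,D^2G(\xi_s)(\eta-\eta')\cdot(\eta-\eta')\,ds.
\]
By Step 1 the right-hand side is at least $c|\eta-\eta'|^2\int_0^1(1-s)|\xi_s|^{p-2}ds$.

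For $p\ge2$ we need this to be at least $c|\eta-\eta'|^p$. Use $|\xi_s|\ge |s|\eta-\eta'|-|\eta'||$; near $s=1$ this is comparable to $|\eta-\eta'|$ on a set of size of order one, unless $|\eta'|\gtrsim|\eta-\eta'|$, in which case a neighbourhood of $s=0$ works instead. Converting $|\cdot|$ to $H$ via \eqref{controlloH} and multiplying by $p$ gives \eqref{introeq:ineqStandard}.

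For $1<p<2$ we use $|\xi_s|\le|\eta|+|\eta'|$, which gives $|\xi_s|^{p-2}\ge(|\eta|+|\eta'|)^{p-2}$. Together with $\int_0^1(1-s)ds=\frac12$ and \eqref{controlloH}, this gives \eqref{intro:eq:ineqStandardBis} directly.

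The main obstacle is the lower bound in the $p\ge2$ case of Step 3, together with justifying the integral formulas when the segment crosses the origin. I will handle the crossing by perturbing $\eta'$ slightly and passing to the limit, using the continuity of $G$ and of $a$.
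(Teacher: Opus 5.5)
Your proof is correct. The paper gives no argument of its own for Proposition~\ref{prop:Lindqvist} and only refers to the literature, and the standard proof there is essentially yours: two-sided bounds $c|\xi|^{p-2}|\zeta|^2\le D^2\bigl(\tfrac1p H^p\bigr)(\xi)\zeta\cdot\zeta\le C|\xi|^{p-2}|\zeta|^2$ derived from \eqref{Hh3} (the ellipticity recalled in Remark~\ref{rmk:regularity}), then integration along $\eta'+s(\eta-\eta')$ (respectively Taylor's formula with integral remainder), combined with the Lindqvist-type bounds on $\int_0^1|\eta'+s(\eta-\eta')|^{p-2}\,ds$.

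Your reading of the two occurrences of $\tilde C_p$ as distinct constants is also the right one. With one and the same positive constant, \eqref{eq:boundbelow} and \eqref{eq:boundabove} together would force equality in both for every pair $\eta,\eta'$, which fails in general.
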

The next proposition concerns the convergence of the gradients of a sequence in $W^{1,p}(\Omega)$. This tool is very standard in the euclidean framework, but we need its anisotropic version in the sequel.
\begin{proposition}\label{prop:convgrad}
		Let $H$ satisfy \eqref{Hh1}-\eqref{Hh3}. Let $p>1$ and let $v_k$ be a sequence of functions in $W^{1,p}(\Omega)$ such that: 
	\begin{itemize}
		\item[(a)] $v_k$ weakly converges to $v$ in $W^{1,p}(\Omega)$;
		
		\item[(b)] $\| H(\nabla v_k)\|_{L^p(\Omega)}$ converges to $\| H(\nabla v)\|_{L^p(\Omega)}$ as $k \to +\infty$.
	\end{itemize}
	
	Then $\| H(\nabla(v_k-v)) \|_{L^p(\Omega)} \to 0$ as $k \to +\infty$. 
\end{proposition}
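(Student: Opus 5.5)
The idea is to use the uniform convexity of $\xi\mapsto H^p(\xi)$, encoded in the inequalities \eqref{introeq:ineqStandard} and \eqref{intro:eq:ineqStandardBis} of Proposition \ref{prop:Lindqvist}. Set $\eta=\nabla v_k$ and $\eta'=\nabla v$ (in the case $p\ge2$) and integrate over $\Omega$:
\begin{equation*}
\int_\Omega H^p(\nabla v_k) \ge \int_\Omega H^p(\nabla v) + p\int_\Omega H^{p-1}(\nabla v)\nabla H(\nabla v)\cdot\nabla(v_k-v) + \hat C_p\int_\Omega H^p(\nabla(v_k-v)).
\end{equation*}
The field $H^{p-1}(\nabla v)\nabla H(\nabla v)$ lies in $L^{p'}(\Omega)^N$, since by \eqref{eq:gradHlimitato} and \eqref{controlloH} its modulus is at most $K\beta^{p-1}|\nabla v|^{p-1}$. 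Hypothesis (a) gives $\nabla v_k\rightharpoonup\nabla v$ weakly in $L^p$, so the middle integral tends to $0$. Hypothesis (b) says the left-hand side tends to $\int_\Omega H^p(\nabla v)$. Passing to the $\limsup$ therefore yields $\limsup_k \hat C_p\int_\Omega H^p(\nabla(v_k-v))\le 0$. Provided $\hat C_p>0$, which is how the constant in Proposition \ref{prop:Lindqvist} should be read for the inequality to carry content, this proves the claim for $p\ge2$.

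For $1<p<2$ the same computation with \eqref{intro:eq:ineqStandardBis} gives
\begin{equation*}
\lim_{k\to\infty}\int_\Omega [H(\nabla v_k)+H(\nabla v)]^{p-2}H^2(\nabla(v_k-v)) = 0,
\end{equation*}
where the integrand is set to zero wherever both gradients vanish. We then remove the degenerate weight with the usual Hölder trick. Write
\begin{equation*}
H^p(\nabla(v_k-v)) = \Big([H(\nabla v_k)+H(\nabla v)]^{p-2}H^2(\nabla(v_k-v))\Big)^{p/2}\,[H(\nabla v_k)+H(\nabla v)]^{(2-p)p/2}.
\end{equation*}
Apply Hölder's inequality with exponents $2/p$ and $2/(2-p)$:
\begin{equation*}
\int_\Omega H^p(\nabla(v_k-v)) \le \Big(\int_\Omega [\cdot]^{p-2}H^2(\nabla(v_k-v))\Big)^{p/2}\Big(\int_\Omega (H(\nabla v_k)+H(\nabla v))^p\Big)^{(2-p)/2}.
\end{equation*}
The second factor is bounded because $v_k$ is bounded in $W^{1,p}$, by weak convergence and \eqref{controlloH}. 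The first factor tends to $0$, which concludes this case.

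The only delicate point is that the pointwise inequalities need $|\eta|+|\eta'|>0$, or in the case $p\ge2$ must hold everywhere. On the set where $\nabla v_k=\nabla v=0$ all terms vanish, so the integrated inequality still holds there. The main obstacle I expect is making sure that the convexity inequalities are really available with a strictly positive constant $\hat C_p$. This follows from \eqref{Hh3}, i.e. the uniform convexity of $\mathcal B_1^H$. If needed, I would instead obtain the conclusion from the monotonicity inequality \eqref{eq:boundbelow}, combined with the weak lower semicontinuity of $\int H^p$ and a Radon--Riesz type argument, but the route above is more direct.
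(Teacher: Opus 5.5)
Your argument is correct. For $p\ge 2$ it is the paper's proof: integrate \eqref{introeq:ineqStandard} with $\eta=\nabla v_k$, $\eta'=\nabla v$, then use (b) for the two energy terms and (a) together with $H^{p-1}(\nabla v)\nabla H(\nabla v)\in L^{p'}(\Omega)$ for the linear term.

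For $1<p<2$ you reach the same intermediate statement as the paper, namely that the weighted integral $\int_\Omega [H(\nabla v_k)+H(\nabla v)]^{p-2}H^2(\nabla(v_k-v))$ tends to zero. You then remove the weight differently, and your way is the one that works. The paper first splits off the case $\nabla v=0$, which it handles by Minkowski. It then invokes the pointwise bound $H^{p-2}(\nabla(v_k-v))\ge [H(\nabla v_k)+H(\nabla v)]^{p-2}$. Multiplied by $H^2(\nabla(v_k-v))$, this only says that the weighted integrand is \emph{at most} $H^p(\nabla(v_k-v))$. That is the wrong direction for transferring convergence to zero from the weighted integral to $\int_\Omega H^p(\nabla(v_k-v))$.

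Your H\"older step closes the case: use exponents $2/p$ and $2/(2-p)$, together with the uniform bound on $\int_\Omega (H(\nabla v_k)+H(\nabla v))^p$ from \eqref{controlloH} and the boundedness of weakly convergent sequences. It also makes the paper's case distinction unnecessary, since your convention handles pointwise the set where both gradients vanish.

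Your remark that $\hat C_p$ must be strictly positive is well taken. Proposition \ref{prop:Lindqvist} only states that $\hat C_p$ is nonnegative, and the paper tacitly uses positivity in both cases.
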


\begin{proof}
	First we show the case $p \geq 2$. Thanks to \eqref{introeq:ineqStandard}, taking $\eta= \nabla v_k=\nabla v + \nabla(v_k-v)$ and $\eta'=\nabla v$, we get
	\begin{equation}\label{eq:convgrad1}
		\int_\Omega H^p(\nabla v_k) - \int_\Omega H^p(\nabla v) - p \int_\Omega H^{p-1}(\nabla v)  \nabla H(\nabla v) \cdot \nabla (v_k-v)  \geq \hat C_p \int_\Omega H^p(\nabla(v_k-v)) \geq 0.
	\end{equation}
	We focus our attention on the third term of \eqref{eq:convgrad1} on the left-hand side. In particular, it follows from \eqref{eq:gradHlimitato} that
	\begin{equation}\label{eq:weightp-1}
		H^{p-1}(\nabla v)  |\nabla H(\nabla v)| \leq K  H^{p-1}(\nabla v)  \in L^{p'}(\Omega).
	\end{equation}
	Therefore, we can pass to the limit as $k \to +\infty$ in \eqref{eq:convgrad1}, using (b) for the first two terms and (a) combined with \eqref{eq:weightp-1} for the third term, in order to get
	\begin{equation}\label{eq:thesisconvdiffgrad}
		\lim_{k\to +\infty} \int_\Omega H^p(\nabla(v_k-v)) = 0.
	\end{equation}
	Now, we focus our attention on the case $1<p<2$. If we assume that $\nabla v_k \rightharpoonup 0$ as $k \rightarrow +\infty$, then $\nabla v=0$ and by (b) we have $\| H(\nabla v_k) \|_{L^p(\Omega)} \rightarrow 0$ as $k \rightarrow +\infty$. Then, since $H$ is convex, we can use the Minkowski inequality in order to deduce that
	\begin{equation}\label{eq:convp<2_1}
		0 \leq \| H(\nabla(v_k-v)) \|_{L^p(\Omega)} \leq  \| H(\nabla(v_k))\| + \|H(\nabla v) \|_{L^p(\Omega)} = \| H(\nabla v_k) \|_{L^p(\Omega)} \rightarrow 0, \text{ as } k \rightarrow +\infty
	\end{equation}
	and hence the thesis. 
	
	Then we assume $\nabla v_k \rightharpoonup \nabla v \neq 0$ as $k \rightarrow +\infty$.	Thanks to \eqref{intro:eq:ineqStandardBis}, taking $\eta= \nabla v_k=\nabla v + \nabla(v_k-v)$ and $\eta'=\nabla v$, we get
\begin{equation}\label{eq:convgrad2}
	\begin{split}
	\int_\Omega H^p(\nabla v_k) - \int_\Omega H^p(\nabla v) &- p \int_\Omega H^{p-1}(\nabla v)  \nabla H(\nabla v) \cdot \nabla (v_k-v)  \\
	&\geq \hat C_p \int_\Omega \left[H(\nabla v_k)+H(\nabla v)\right]^{p-2} H^2(\nabla(v_k-v)) \geq 0.
\end{split}
\end{equation}
We can use the same argument of the previous case for all the terms on the left-hand side of \eqref{eq:convgrad2}. Thus passing to the limit as $k \rightarrow + \infty$ in \eqref{eq:convgrad2}, we get
\begin{equation}\label{eq:convp<2_2}
	\lim_{k\to +\infty} \int_\Omega \frac{H^2(\nabla(v_k-v))}{\left[H(\nabla v_k)+H(\nabla v)\right]^{2-p}} = 0.
\end{equation}
Since $1<p<2$, by triangular inequality it follows $H^{p-2}(\nabla (v_k-v)) \geq \left[H(\nabla v_k)+H(\nabla v)\right]^{p-2}$ which, gathered with \eqref{eq:convp<2_2}, implies
\eqref{eq:thesisconvdiffgrad}. Thus combining this fact with \eqref{eq:convp<2_1} we obtain the thesis.
\end{proof}

We conclude this section by denoting with $\phi_1$ the first positive eigenfunction of the anisotropic $p$-laplacian in $\Omega$. Namely $\phi_1$ satisfies (\cite{BFK,lindqv})
\begin{equation}\label{einvaluprob}
	\begin{cases} \displaystyle
		-\Delta_p^H\phi_1=  \lambda_1 \phi_1^{p-1} & \text{in}\,\,\Omega,  \\
		\phi_1=0 & \text{on}\,\,\partial \Omega.
	\end{cases}
\end{equation}
%
%
	In particular, we will use that $\phi_1 \in C^{1,\sigma}(\overline{\Omega})$ for some $0<\sigma<1$. This finally  allows to apply the Hopf boundary lemma \cite[Theorem 4.5]{CSR} to $\phi_1$ which, in particular, also permits to deduce that
	\begin{equation}\label{phi_integrabilita}
		\int_{\Omega} \phi_{1}^{r} < +\infty \quad \Longleftrightarrow \quad r > -1.
	\end{equation}
	\begin{remark}\label{rmk:regularity}
Let us now spend a few words concerning the regularity up to the boundary of $\phi_1$. As pointed out in \cite[Theorem 1.5]{CFV16}, assumption \eqref{Hh3}, i.e. to ask that $H$ is uniformly elliptic with constant $\lambda$, is equivalent to require that there exist positive constants $\Upsilon, \Gamma$ such that, for any $\xi \in \mathbb{R}^N \setminus \{0\}$, $\zeta \in \mathbb{R}^N$, the following conditions hold:
\begin{equation*}\label{eq:Hboundbelow}
	\left[D^2 H^{p}(\xi)\right]_{ij} \zeta_i \zeta_j \geq \Upsilon |\xi|^{p-2} |\zeta|^2,
\end{equation*}
and
\begin{equation*}\label{eq:Hboundabove}
	\sum_{i,j=1}^{n} \left|\left[D^2 H^{p}(\xi)\right]_{ij}\right| \leq \Gamma |\xi|^{p-1}.
\end{equation*}
The last two conditions are exactly the assumption (0.3a) and (0.3b) in \cite[Theorem 1]{lieberman}. Assumption (0.3c) is automatically fulfilled since our operator does not depend directly on $x$ and $\phi_1$. The same holds for assumption (0.3d) since in the nonlinearity does not appear the gradient of $\phi_1$. Recalling further that $\phi_1 \in L^{\infty}(\Omega)$ (see \cite{BFK}), we can finally apply \cite[Theorem 1]{lieberman} in order to deduce that $\phi_1 \in C^{1,\sigma}(\overline{\Omega})$. We also refer to the recent \cite{An} for similar regularity results on a more general class of problems.
	\end{remark}

\section{Uniqueness of weak solutions: Proof of Theorem \ref{intro:thm:uniqueness}}\label{sec:Uniqueness}
The aim of this section is to prove the uniqueness of positive weak solutions to
\begin{equation}\label{eq:Pb_section_notion}
	\begin{cases}
		\displaystyle -\Delta_p^H u= F(x,u) & \text { in } \Omega, \\
		u=0 & \text { on } \partial \Omega.
	\end{cases}
\end{equation}
where $1<p<N$ and the function $F:\Omega\times (0,+\infty)\to [0,+\infty)$ is a Carath\'eodory function that is,  for almost every $x \in \Omega$, the map $s \mapsto F(x,s)$ is continuous while for every fixed $s \in \mathbb{R}$, the function $x \mapsto F(x,s)$ is measurable. 

\medskip

First we precisely set what we mean by a weak solution to \eqref{eq:Pb_section_notion}. 

\begin{defin} \label{defweak}
	 A positive function $u\in W^{1,p}_0(\Omega)$ is a weak solution to \eqref{eq:Pb_section_notion} if $F(x,u)\in L^1_{\rm loc}(\Omega)$ and if
	\begin{equation}
		\label{defweakformulation}
		\displaystyle \int_\Omega H^{p-1}(\nabla u) \nabla H(\nabla u) \cdot \nabla \varphi=\int_\Omega F(x,u)\varphi, \ \forall \varphi \in C^1_c(\Omega).
	\end{equation}
\end{defin} 
Next lemma shows that the weak formulation \eqref{defweakformulation} can be extended to a larger class of test functions.
\begin{lemma} \label{lem:extensiontest}
	Let $H$ satisfy \eqref{Hh1}-\eqref{Hh3}, let $1<p<N$ and let $u$ be a weak solution to \eqref{eq:Pb_section_notion} in the sense of Definition \ref{defweak}. Then it holds
\[
\int_{\Omega} H^{p-1}(\nabla u)\nabla H(\nabla u)\cdot\nabla\psi = \int_{\Omega}F(x,u)\psi, \quad \forall \psi \in W_0^{1,p}(\Omega).
\]
\end{lemma}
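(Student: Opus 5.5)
The plan is to pass from $C^1_c$ test functions to $W^{1,p}_0$ test functions by approximation. The key difficulty is that $F(x,u)$ is only in $L^1_{\rm loc}(\Omega)$. Since $F\ge 0$, the natural tools are Fatou's lemma and monotone convergence. The steps, in order, are as follows.

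\emph{Step 1.} First I extend \eqref{defweakformulation} to test functions $\varphi\in W^{1,p}_0(\Omega)\cap L^\infty(\Omega)$ with compact support in $\Omega$. Mollify: set $\varphi_\varepsilon=\varphi*\rho_\varepsilon\in C^1_c(\Omega)$. For $\varepsilon$ small, all supports lie in a fixed compact $K\subset\subset\Omega$ and $\|\varphi_\varepsilon\|_\infty\le\|\varphi\|_\infty$. Also $\varphi_\varepsilon\to\varphi$ in $W^{1,p}$ and a.e. along a subsequence. The left-hand side converges because $H^{p-1}(\nabla u)\nabla H(\nabla u)\in L^{p'}(\Omega)$ by \eqref{eq:gradHlimitato}. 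The right-hand side converges by dominated convergence, with dominating function $\|\varphi\|_\infty F(x,u)\chi_K\in L^1$.

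\emph{Step 2.} Next I show that $F(x,u)\psi\in L^1(\Omega)$ for every nonnegative $\psi\in W^{1,p}_0(\Omega)$, with
\[
\int_\Omega F(x,u)\psi\le \int_\Omega H^{p-1}(\nabla u)\nabla H(\nabla u)\cdot\nabla\psi .
\]
Take $\varphi_n\in C^1_c(\Omega)$, $\varphi_n\ge0$, with $\varphi_n\to\psi$ in $W^{1,p}_0$ and a.e. Test Step 1 with $T_k(\varphi_n)$, which is bounded and compactly supported. Letting $n\to\infty$, the left-hand side converges, since $T_k(\varphi_n)\to T_k(\psi)$ weakly in $W^{1,p}$. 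The right-hand side is handled by Fatou's lemma. This gives $\int F(x,u)T_k(\psi)\le\int H^{p-1}(\nabla u)\nabla H(\nabla u)\cdot\nabla T_k(\psi)$. Now let $k\to\infty$, using monotone convergence on the right-hand side and strong convergence $T_k(\psi)\to\psi$ on the left-hand side.

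\emph{Step 3.} Finally I prove the equality for nonnegative $\psi\in W^{1,p}_0(\Omega)$; for general $\psi$, split $\psi=\psi^+-\psi^-$ and use linearity. This step is the main obstacle, because the weak approximants do not converge monotonically to $\psi$. The plan is to choose approximants dominated by $\psi$ itself. Take cut-offs built from $u$ or from compactly supported functions. Concretely, let $\eta_n\in C^1_c(\Omega)$ with $0\le\eta_n\le1$, and set $w_n=\min(\psi,\,\varphi_n)$, where $\varphi_n\in C^1_c(\Omega)$, $\varphi_n\ge0$, and $\varphi_n\to\psi$ in $W^{1,p}$. Then $w_n$ is compactly supported, lies in $W^{1,p}_0$, satisfies $0\le w_n\le\psi$, and $w_n\to\psi$ in $W^{1,p}$ because $\min$ is continuous in $W^{1,p}$. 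Apply Step 1 to $T_k(w_n)$ and let $k\to\infty$. On the right-hand side, $T_k(w_n)\uparrow w_n$ monotonically, while on the left-hand side $T_k(w_n)\to w_n$ strongly in $W^{1,p}$. This gives the equality for $w_n$, using that $F(x,u)w_n\le F(x,u)\psi\in L^1$ by Step 2. Then let $n\to\infty$: the left-hand side converges by strong convergence, and the right-hand side by dominated convergence with dominating function $F(x,u)\psi$ from Step 2. This yields the claimed identity.
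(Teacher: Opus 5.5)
Your proposal is correct and follows essentially the paper's route: the paper also tests with mollifications of $\psi\wedge\phi_n$ (your Step 1 applied to $w_n$), obtains $F(x,u)\psi\in L^1(\Omega)$ via Fatou's lemma, and concludes by dominated convergence using $0\le F(x,u)(\psi\wedge\phi_n)\le F(x,u)\psi$. The truncations $T_k$ in Steps 2--3 and the cut-offs $\eta_n$ (which you never use) are superfluous, since $\varphi_n$ and $w_n\le\varphi_n$ are already bounded and compactly supported.
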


\begin{proof}
	Let $\psi \in W_0^{1,p}(\Omega)$ be nonnegative and let us consider  $\psi_{\delta,n} = \rho_\delta \ast (\psi \wedge \phi_n)$ (where we define $\psi \wedge \phi_n:= \inf (\psi,\phi_n)$), where $\rho_\delta$ is a smooth mollifier and $0\le \phi_n\in C^1_c(\Omega)$ converging to $\psi$ in $W_0^{1,p}(\Omega)$.
	In particular it holds
	\begin{equation}\label{propapprox}
		\begin{cases}
			\psi_{\delta,n} \stackrel{\delta  \to 0^+}{\to} \psi_{n}  \ \ \ \text{in } W^{1,p}_0(\Omega) \text{ and } \text{$*$-weakly in } L^\infty(\Omega), 
			\\
			\psi_{n} \stackrel{n  \to +\infty}{\to} \psi \ \ \ \ \ \text{in } W_0^{1,p}(\Omega),
			\\
			\supp \psi_n\subset \subset \Omega: 0\le \psi_n\le \psi \ \ \ \text{for all } n\in \mathbb{N}.
		\end{cases}
	\end{equation}
	Hence we can take $\psi_{\delta,n}$ as a test function in \eqref{defweakformulation} yielding to  
	$$
	\int_{\Omega}H^{p-1}(\nabla u)\nabla H(\nabla u)\cdot\nabla\psi_{\delta,n} = \int_{\Omega}F(x,u)\psi_{\delta,n},
	$$
	and the aim is to take the limit first as $\delta \to 0^+$ and then as $n \to +\infty$. As $\psi_{\delta,n}$ converges to $\psi_n$ in $W_0^{1,p}(\Omega)$ and recalling \eqref{controlloH}, one can simply pass to the limit on the left-hand side. For the right-hand side, since $F(x,u) \in L^1_{\rm loc}(\Omega)$ and since $\psi_{\delta,n}$ converges $*$-weakly in $L^\infty(\Omega)$ to $\psi_n$ which is compactly  supported in $\Omega$, then we can pass to the limit as $\delta \rightarrow 0^+$. Hence we have shown
	\begin{equation}\label{est1}
		\int_{\Omega}H^{p-1}(\nabla u)\nabla H(\nabla u)\cdot\nabla\psi_n = \int_{\Omega}F(x,u)\psi_n.
	\end{equation}
	Now observe that, as \eqref{controlloH} and \eqref{eq:gradHlimitato} are in force, an application of the Young inequality provides 
	\begin{equation*}\label{uni3}
		\int_{\Omega}F(x,u)\psi_n \le \frac{\beta K(p-1)}{p}\int_{\Omega} |\nabla u|^{p} + \frac{\beta K}{p}\int_{\Omega}|\nabla \psi_n|^p,	
	\end{equation*}	
	and, by \eqref{propapprox}, the right-hand of the previous is bounded with respect to $n$. Then one can apply the Fatou Lemma with respect to $n$, in order to get
	\begin{equation*}\label{uni4}
		\int_{\Omega}F(x,u)\psi \le C.	
	\end{equation*}	
	Now  we can easily pass to the limit as $n\to + \infty$ in \eqref{est1}. Indeed the term on the left-hand once again passes to the limit as \eqref{controlloH} is in force and thanks to \eqref{propapprox}. For the right-hand it is sufficient an application of the Lebesgue Theorem since 
	$$F(x,u)\psi_n\le F(x,u)\psi \in L^1(\Omega).$$
	Therefore we have shown  
	\begin{equation*}
\int_{\Omega} H^{p-1}(\nabla u)\nabla H(\nabla u)\cdot\nabla\psi = \int_{\Omega}F(x,u)\psi,
	\end{equation*} 		 
	for every nonnegative $\psi\in W^{1,p}_0(\Omega)$. The proof concludes as the case of $\psi$ with general sign simply follows. 
	\end{proof}

The main goal of this section is proving Theorem \ref{intro:thm:uniqueness} which will follows as a simple application of a comparison principle. In particular let us introduce 
\begin{equation}\label{eq:Pb_per_comparison}
	\begin{cases}
		\displaystyle -\Delta_p^H v_i= G_{i}(x,v_i) & \text { in } \Omega, \\
		v_i=0 & \text { on } \partial \Omega,
	\end{cases}
\end{equation}
\noindent where $G_1, G_2 : \Omega \times (0,+\infty) \to [0,+\infty)$ are Carath\'{e}odory functions and $v_1, v_2$ are weak solutions to \eqref{eq:Pb_per_comparison}.

The proof of the following result is reminiscent of a famous result by Brezis and Oswald \cite{Brezis_Oswald} and it is an adaptation to the anisotropic setting of \cite[Theorem 2.2]{DurOl}.
\begin{proposition}\label{thm:Comparison_Principle}
		Let $H$ satisfy \eqref{Hh1}-\eqref{Hh3} and let $1<p<N$. Let  $G_1, G_2 : \Omega \times (0,+\infty) \to [0,+\infty)$ be Carath\'{e}odory functions such that either
	\begin{equation*}
		\dfrac{G_{1}(x,s)}{s^{p-1}} \textrm{ or } \dfrac{G_{2}(x,s)}{s^{p-1}} \textrm{ is strictly decreasing w.r.t. } s \textrm{ for a.e. } x \in \Omega,
	\end{equation*}
	\noindent and 
	\begin{equation}\label{eq:Ipotesi_su_G_i}
		G_{1}(x,s)\leq G_{2}(x,s) \quad \textrm{ for a.e. } x \in \Omega \textrm{ and for all } s \in (0,+\infty).
	\end{equation}
	Let $v_1$ and $v_2$ be weak solutions to \eqref{eq:Pb_per_comparison} with data, respectively, $G_{1}$ and $G_{2}$. Then $v_1 \le v_2$ almost everywhere in $\Omega$.
\end{proposition}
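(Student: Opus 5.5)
We follow the Brezis--Oswald / D\'iaz--Saa idea, adapted as in \cite[Theorem 2.2]{DurOl}. Assume, say, that $G_1(x,s)/s^{p-1}$ is strictly decreasing; the other case is symmetric. The natural test functions are
\[
\varphi_1=\frac{(v_1^p-v_2^p)^+}{v_1^{p-1}},\qquad \varphi_2=\frac{(v_1^p-v_2^p)^+}{v_2^{p-1}},
\]
used in the equations for $v_1$ and $v_2$ respectively, with the results subtracted. Since $v_i$ is only positive, not bounded away from zero, and $W^{1,p}_0$ functions need not be bounded, we regularize. We replace $v_i$ by $v_{i,\varepsilon}=v_i+\varepsilon$ in the denominators. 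We also truncate, using $T_k(v_i)$ or working with $\left(T_k(v_1+\varepsilon)^p-T_k(v_2+\varepsilon)^p\right)^+$ divided by $(v_i+\varepsilon)^{p-1}$. The resulting functions lie in $W^{1,p}_0(\Omega)\cap L^\infty(\Omega)$, so by Lemma \ref{lem:extensiontest} they are admissible.

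\textbf{Step 1 (gradient terms).} On the set $\{v_1>v_2\}$ (modulo $\varepsilon,k$), with $w_i=v_{i,\varepsilon}$, one computes
\[
H^{p-1}(\nabla w_1)\nabla H(\nabla w_1)\cdot\nabla\!\Big(\frac{w_1^p-w_2^p}{w_1^{p-1}}\Big)-H^{p-1}(\nabla w_2)\nabla H(\nabla w_2)\cdot\nabla\!\Big(\frac{w_1^p-w_2^p}{w_2^{p-1}}\Big).
\]
This is the anisotropic Picone-type expression. Writing $\nabla\log w_i$ and using $1$-homogeneity and \eqref{eq:eulero}, it equals
\[
w_1^p\Big[H^p(\nabla\log w_1)-H^p(\nabla\log w_2)-pH^{p-1}(\nabla\log w_2)\nabla H(\nabla\log w_2)\cdot(\nabla\log w_1-\nabla\log w_2)\Big]
\]
plus the symmetric term with roles exchanged. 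Both brackets are $\ge0$ by convexity of $H^p$, which follows from \eqref{introeq:ineqStandard}/\eqref{intro:eq:ineqStandardBis}. So the left-hand side is nonnegative.

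\textbf{Step 2 (right-hand side).} The difference of the right-hand sides is
\[
\int_{\{v_1>v_2\}}\Big(\frac{G_1(x,v_1)}{w_1^{p-1}}-\frac{G_2(x,v_2)}{w_2^{p-1}}\Big)(w_1^p-w_2^p).
\]
As $\varepsilon\to0$ and $k\to\infty$, \eqref{eq:Ipotesi_su_G_i} and strict monotonicity give
\[
\frac{G_1(x,v_1)}{v_1^{p-1}}-\frac{G_2(x,v_2)}{v_2^{p-1}}\le \frac{G_1(x,v_1)}{v_1^{p-1}}-\frac{G_1(x,v_2)}{v_2^{p-1}}<0
\]
on $\{v_1>v_2\}$. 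Combining this with Step 1 forces $|\{v_1>v_2\}|=0$.

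\textbf{Main obstacle.} The hard part is passing to the limit in $\varepsilon$ and $k$. On the gradient side we would drop the nonnegative Picone terms by Fatou, or keep the inequality, and check that the error terms from truncation and from the $\varepsilon$-shift vanish. These are controlled since $|\nabla(w_1^p-w_2^p)/w_i^{p-1}|$ involves $(w_j/w_i)^{p-1}|\nabla w_j|$, which is bounded on $\{v_1>v_2\}$ for $i=1$. For $i=2$ the ratio $w_1/w_2$ can blow up, so we would rather insert the truncation $T_k$ in the numerator so that $\varphi_2\le k^p/\varepsilon^{p-1}$.

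On the right-hand side, the terms are nonnegative functions times bounded test functions, with $G_i(x,v_i)\varphi\in L^1$ by Lemma \ref{lem:extensiontest}. We obtain a signed integrand whose negative part is integrated over $\{v_1>v_2\}$, and conclude by Fatou that $\int_{\{v_1>v_2\}}(\text{negative})\ge 0$. The delicate bookkeeping is mainly the case $1<p<2$ and the sign of the $\varepsilon$-corrections, for which we would follow \cite{DurOl} verbatim, replacing $|\xi|^{p-2}\xi$ by $H^{p-1}(\xi)\nabla H(\xi)$.
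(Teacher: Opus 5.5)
Your proposal follows essentially the same route as the paper, which also adapts \cite[Theorem 2.2]{DurOl}. The common ingredients are the test functions $T_k\bigl(((v_1+\varepsilon)^p-(v_2+\varepsilon)^p)^+\bigr)/(v_i+\varepsilon)^{p-1}$, the anisotropic Picone identity from convexity of $H^p$ (with $\hat C_p=0$, so $1<p<2$ is not actually delicate), and strict monotonicity of $G_1(x,s)/s^{p-1}$ combined with $G_1\le G_2$. The one step you leave vague is the limit $\varepsilon\to0^+$: Fatou only handles the negative part of the integrand, and the bound $k/\varepsilon^{p-1}$ on the test functions gives no $\varepsilon$-independent $L^1$ majorant for the positive part. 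The paper obtains that majorant, $(p-1)H^p(\nabla v_1)+p\,G_1(x,v_1)v_1$, from $T_k(\cdot)/(v_1+\varepsilon)^{p-1}\le p\,v_1$, $T_k(\cdot)/(v_1+\varepsilon)^{p}\le 1$ and $\int_\Omega H^p(\nabla v_1)=\int_\Omega G_1(x,v_1)v_1$, and then applies Vitali's theorem.
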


We split the proof in several technical lemmas. We firstly introduce some notations; let $v_1, v_2 \in W^{1,p}_{0}(\Omega)$, then for every $\varepsilon > 0$ and $k \in \N$ 
\begin{equation*}\label{A_k}
	A_{k,\epsilon}:=\{x\in\Omega:0\le(v_{1}(x)+\varepsilon)^{p}-(v_{2}(x)+\varepsilon)^{p}\le k\}, \ A_k:=A_{k,0}, \ A_{k,\varepsilon}^{c}:=\Omega \setminus A_{k,\epsilon}, \ A^c_k:=A^c_{k,0}.
\end{equation*}
\begin{lemma}\label{lem:giuste_test}
	Let $H$ satisfy \eqref{Hh1}-\eqref{Hh3} and let $1<p<N$. Let $v_1, v_2 \in W^{1,p}_{0}(\Omega)$ and let us denote by
	\begin{equation}\label{eq:Def_psi_1}
		\psi_{1}:= \dfrac{T_k (((v_{1}+\varepsilon)^{p}-(v_{2}+\varepsilon)^{p})^{+})}{(v_{1}+\varepsilon)^{p-1}},
	\end{equation}
	\noindent and 
	\begin{equation}\label{eq:Def_psi_2}
		\psi_{2}:= \dfrac{T_k (((v_{1}+\varepsilon)^{p}-(v_{2}+\varepsilon)^{p})^{+})}{(v_{2}+\varepsilon)^{p-1}}.
	\end{equation}
	Then, $\psi_{1}, \psi_{2}\in W^{1,p}_{0}(\Omega)$ and
	\begin{equation*}
		\begin{aligned}
			\nabla \psi_{1} =& \left(\nabla v_1 - p \left(\dfrac{v_2 + \varepsilon}{v_1+\varepsilon}\right)^{p-1} \nabla v_2 + (p-1) \left(\dfrac{v_2+\varepsilon}{v_1+\varepsilon}\right)^p \nabla v_1 \right)\chi_{A_{k,\varepsilon}}\\
			&-(p-1)\dfrac{T_k (((v_{1}+\varepsilon)^{p}-(v_{2}+\varepsilon)^{p})^{+})}{(v_{1}+\varepsilon)^{p}}\nabla v_1 \, \chi_{A_{k,\varepsilon}^{c}}
		\end{aligned}
	\end{equation*}
	\noindent and 	
	\begin{equation*}
		\begin{aligned}
			\nabla \psi_{2} =& -\left(\nabla v_2 - p \left(\dfrac{v_1 + \varepsilon}{v_2+\varepsilon}\right)^{p-1} \nabla v_1 + (p-1) \left(\dfrac{v_1+\varepsilon}{v_2+\varepsilon}\right)^p \nabla v_2 \right)\chi_{A_{k,\varepsilon}}\\
			&-(p-1)\dfrac{T_k (((v_{1}+\varepsilon)^{p}-(v_{2}+\varepsilon)^{p})^{+})}{(v_{2}+\varepsilon)^{p}}\nabla v_2 \, \chi_{A_{k,\varepsilon}^{c}}.
		\end{aligned}
	\end{equation*}
	\begin{proof}
		We refer to \cite[Remark 2.5]{DurOl} for the details showing that $\psi_{1}, \psi_{2}\in W^{1,p}_{0}(\Omega)$. The rest are straightforward computations.
	\end{proof}
\end{lemma}

\begin{lemma}\label{lem:Uso_di_convessità}
	Let $H$ satisfy \eqref{Hh1}-\eqref{Hh3} and let $1<p<N$. Let $v_1, v_2 \in W^{1,p}_{0}(\Omega)$. Then
	\begin{equation}\label{eq:Stima_1_in_A_k_eps}
		H^{p}(\nabla v_1) - \left(\dfrac{v_1+\varepsilon}{v_2+\varepsilon}\right)^p H^{p}(\nabla v_2) - p \left(\dfrac{v_1+\varepsilon}{v_2+\varepsilon}\right)^{p-1} H^{p-1}(\nabla v_2) \nabla H(\nabla v_2)\cdot \left(\nabla v_1 - \left(\dfrac{v_1+\varepsilon}{v_2+\varepsilon}\right)\nabla v_2\right) \geq 0,
	\end{equation}
	\noindent and
	\begin{equation}\label{eq:Stima_2_in_A_k_eps}
		H^{p}(\nabla v_2) - \left(\dfrac{v_2+\varepsilon}{v_1+\varepsilon}\right)^p H^{p}(\nabla v_1) - p \left(\dfrac{v_2+\varepsilon}{v_1+\varepsilon}\right)^{p-1} H^{p-1}(\nabla v_1) \nabla H(\nabla v_1)\cdot \left(\nabla v_2 - \left(\dfrac{v_2+\varepsilon}{v_1+\varepsilon}\right)\nabla v_1\right)\geq 0.
	\end{equation}
	\begin{proof}
		The validity of \eqref{eq:Stima_1_in_A_k_eps} is an immediate consequence of \eqref{introeq:ineqStandard} and \eqref{intro:eq:ineqStandardBis} with $\eta = \nabla v_1$ and $\eta' = \left(\tfrac{v_1 +\varepsilon}{v_2 +\varepsilon}\right)\nabla v_2$ (and $\hat{C}_p =0$), while \eqref{eq:Stima_2_in_A_k_eps} follows as well from \eqref{introeq:ineqStandard} and \eqref{intro:eq:ineqStandardBis} choosing $\eta = \nabla v_2$ and $\eta' = \left(\tfrac{v_2 +\varepsilon}{v_1 +\varepsilon}\right)\nabla v_1$ (and $\hat{C}_p =0$). This closes the proof.
	\end{proof}
\end{lemma}	

\begin{lemma}\label{lem:limite_in_epsilon}
	Let $H$ satisfy \eqref{Hh1}-\eqref{Hh3} and let $1<p<N$. Let  $G : \Omega \times (0,+\infty) \to [0,+\infty)$ be a Carath\'{e}odory function such that 
\begin{equation*}
	\dfrac{G(x,s)}{s^{p-1}}  \textrm{ is strictly decreasing w.r.t. } s \textrm{ for a.e. } x \in \Omega.
\end{equation*}
	For every $\varepsilon\geq 0$, define
	\begin{equation}\label{eq:Def_r_k_varepsilon}
		\begin{aligned}
			r_{k,\varepsilon}:= & (p-1)\dfrac{T_k (((v_{1}+\varepsilon)^{p}-(v_{2}+\varepsilon)^{p})^{+})}{(v_{1}+\varepsilon)^{p}}H^{p}(\nabla v_1) \, \chi_{A_{k,\varepsilon}^{c}} \\
			&+ \left(\dfrac{G(x,v_1)}{(v_1+\varepsilon)^{p-1}} - \dfrac{G(x,v_2)}{(v_2+\varepsilon)^{p-1}}\right)T_k (((v_{1}+\varepsilon)^{p}-(v_{2}+\varepsilon)^{p})^{+}),
		\end{aligned}
	\end{equation}
	\noindent and 
	\begin{equation}\label{eq:Def_tilde_r_k_varepsilon}
		\begin{aligned}
			\tilde{r}_{k,\varepsilon}:= & (p-1)\dfrac{T_k (((v_{1}+\varepsilon)^{p}-(v_{2}+\varepsilon)^{p})^{+})}{(v_{1}+\varepsilon)^{p}}H^{p}(\nabla v_1) \, \chi_{A_{k,\varepsilon}^{c}} \\
			&+ \dfrac{G(x,v_1)}{(v_1+\varepsilon)^{p-1}} \, T_k (((v_{1}+\varepsilon)^{p}-(v_{2}+\varepsilon)^{p})^{+}).
		\end{aligned}
	\end{equation}
	Then the following holds:
	\begin{itemize}
		\item[i)] $0\leq r_{k,\varepsilon}^{+} \leq \tilde{r}_{k,\varepsilon}$;
		\item[ii)] $r_{k,\varepsilon}^{\pm} \to r_{k,0}^{\pm}$ a.e. in $\Omega$, as $\varepsilon \to 0^{+}$; 
		\item[iii)] $\tilde{r}_{k,\varepsilon} \to \tilde{r}_{k,0}$ a.e. in $\Omega$, as $\varepsilon \to 0^{+}$;
		\item[iv)] $r_{k,\varepsilon}^{+} \to r_{k,0}^{+}$ in $L^{1}(\Omega)$, as $\varepsilon \to 0^{+}$.
	\end{itemize}
	\begin{proof}
		Since
		\begin{equation*}
			(f+g)^{+} \leq f^{+} + g^{+} \quad \textrm{ and } \quad (f-g)^{+} \leq f \quad \textrm{ for every } f,g \geq 0,
		\end{equation*}
		\noindent then i) immediately follows. Regarding ii) and iii), one has simply to observe that both $v_1$ and $v_2$ are positive functions. \\
		\noindent We are left with iv). Observe that, as $T_{k}(s)\leq s$ for every $s \geq 0$, for every $\varepsilon\geq 0$ we have either
		\begin{equation}\label{eq:Stima_1_per_tilde_r}
			\dfrac{T_k (((v_{1}+\varepsilon)^{p}-(v_{2}+\varepsilon)^{p})^{+})}{(v_{1}+\varepsilon)^{p}} \,\chi_{A_{k,\varepsilon}^{c}} \leq 1 
		\end{equation}
		\noindent and 
		\begin{equation}\label{eq:Stima_2_per_tilde_r}
			\dfrac{T_k (((v_{1}+\varepsilon)^{p}-(v_{2}+\varepsilon)^{p})^{+})}{(v_{1}+\varepsilon)^{p-1}}  \leq \dfrac{(v_1+\varepsilon)^{p}-\varepsilon^{p}}{(v_{1}+\varepsilon)^{p-1}} < p \, v_1,
		\end{equation}
		\noindent where we used Mean Value Theorem. Moreover, as $v_1 \in W^{1,p}_{0}(\Omega)$ is a weak solution of \eqref{eq:Pb_per_comparison} with data $G$ and as Lemma \ref{lem:giuste_test} is in force, we can use $v_1$ as a test function in the weak formulation of \eqref{eq:Pb_per_comparison}, yielding to
		\begin{equation}\label{eq:Test_con_v_1}
			\int_{\Omega}H^{p}(\nabla v_1) = \int_{\Omega}H^{p-1}(\nabla v_1) \nabla H (\nabla v_1)\cdot \nabla v_1 = \int_{\Omega} G(x,v_1)v_1.
		\end{equation}
		Combining \eqref{eq:Stima_1_per_tilde_r}, \eqref{eq:Stima_2_per_tilde_r} and \eqref{eq:Test_con_v_1}, we get
		\begin{equation*}
			\tilde{r}_{k,\varepsilon} \leq (p-1) H^{p}(\nabla v_1) + p \, G(x,v_1)\, v_1 \in L^{1}(\Omega).
		\end{equation*}
		The latter is enough to employ the Lebesgue Dominated Convergence Theorem yielding that
		\begin{equation*}
			\tilde{r}_{k,\varepsilon} \to \tilde{r}_{k,0} \textrm{ in } L^{1}(\Omega) \textrm{ as } \varepsilon \to 0^{+}.
		\end{equation*}	
		Now, recalling i), we can apply Vitali Theorem obtaining iv). This closes the proof.
	\end{proof}
\end{lemma}

We are now ready to prove the Comparison Principle stated in Proposition \ref{thm:Comparison_Principle}.
\begin{proof}[Proof of Proposition \ref{thm:Comparison_Principle}]
	Without loss of generality, we start by assuming
	\begin{equation}\label{eq:Ipotesi_su_G_1}
		\dfrac{G_{1}(x,s)}{s^{p-1}} \textrm{ is strictly decreasing w.r.t. } s \textrm{ for a.e. } x \in \Omega.
	\end{equation}
 Indeed the case in which is  $G_{2}(x,s)s^{1-p}$ to be strictly decreasing is a simple adaption of the proof we present here. 
	
	We consider the functions $\psi_1, \psi_2 \in W^{1,p}_{0}(\Omega)$ defined, respectively, in \eqref{eq:Def_psi_1} and \eqref{eq:Def_psi_2} and which, thanks to Lemma \ref{lem:extensiontest}, can be taken as test
	in the equations solved, respectively, by $v_1$ and $v_2$. This yields to
	\begin{equation}\label{eq:Solved_by_v_1}
		\int_{\Omega}H^{p-1}(\nabla v_1)\nabla H(\nabla v_1)\cdot \nabla \psi_1 = \int_{\Omega}G_{1}(x,v_1)\psi_1
	\end{equation}
	\noindent and 
	\begin{equation}\label{eq:Solved_by_v_2}
		\int_{\Omega}H^{p-1}(\nabla v_2)\nabla H(\nabla v_2)\cdot \nabla \psi_2 = \int_{\Omega}G_{2}(x,v_2)\psi_2.
	\end{equation}
	Exploiting the expressions of $\nabla \psi_1$ and $\nabla \psi_2$ from Lemma \ref{lem:giuste_test}, and subtracting \eqref{eq:Solved_by_v_2} from \eqref{eq:Solved_by_v_1}, after a rearrangement of terms, we find 
	\begin{equation*}
		\begin{aligned}
			&\int_{A_{k,\varepsilon}}\left( H^{p}(\nabla v_1) - \left(\dfrac{v_1+\varepsilon}{v_2+\varepsilon}\right)^p H^{p}(\nabla v_2) - p \left(\dfrac{v_1+\varepsilon}{v_2+\varepsilon}\right)^{p-1} H^{p-1}(\nabla v_2) \nabla H(\nabla v_2)\cdot \left(\nabla v_1 - \left(\dfrac{v_1+\varepsilon}{v_2+\varepsilon}\right)\nabla v_2\right)\right)\\
			&+\int_{A_{k,\varepsilon}}\left( H^{p}(\nabla v_2) - \left(\dfrac{v_2+\varepsilon}{v_1+\varepsilon}\right)^p H^{p}(\nabla v_1) - p \left(\dfrac{v_2+\varepsilon}{v_1+\varepsilon}\right)^{p-1} H^{p-1}(\nabla v_1) \nabla H(\nabla v_1)\cdot \left(\nabla v_2 - \left(\dfrac{v_2+\varepsilon}{v_1+\varepsilon}\right)\nabla v_1\right)\right)\\
			&+ (p-1) \int_{A_{k,\varepsilon}^{c}}\left(\dfrac{T_k (((v_{1}+\varepsilon)^{p}-(v_{2}+\varepsilon)^{p})^{+})}{(v_{2}+\varepsilon)^{p}} H^{p}(\nabla v_2) - \dfrac{T_k (((v_{1}+\varepsilon)^{p}-(v_{2}+\varepsilon)^{p})^{+})}{(v_{1}+\varepsilon)^{p}}H^{p}(\nabla v_1) \right)\\
			&\leq \int_{\Omega} \left(\dfrac{G_{1}(x,v_1)}{(v_1+\varepsilon)^{p-1}} - \dfrac{G_{2}(x,v_2)}{(v_2+\varepsilon)^{p-1}}\right)T_k (((v_{1}+\varepsilon)^{p}-(v_{2}+\varepsilon)^{p})^{+}).
		\end{aligned}
	\end{equation*}
	Thanks to Lemma \ref{lem:Uso_di_convessità}, it follows that the first two integrals are actually nonnegative, and this, combined with \eqref{eq:Ipotesi_su_G_i}, leads to
	\begin{equation}\label{eq:Stima_basso_int_r_k_eps}
		\begin{aligned}
			0 &\leq (p-1) \int_{A_{k,\varepsilon}^{c}}\dfrac{T_k (((v_{1}+\varepsilon)^{p}-(v_{2}+\varepsilon)^{p})^{+})}{(v_{2}+\varepsilon)^{p}} H^{p}(\nabla v_2) \\
			&\leq  (p-1) \int_{A_{k,\varepsilon}^{c}}\dfrac{T_k (((v_{1}+\varepsilon)^{p}-(v_{2}+\varepsilon)^{p})^{+})}{(v_{1}+\varepsilon)^{p}}H^{p}(\nabla v_1) \\
			& \quad + \int_{\Omega} \left(\dfrac{G_{1}(x,v_1)}{(v_1+\varepsilon)^{p-1}} - \dfrac{G_{2}(x,v_2)}{(v_2+\varepsilon)^{p-1}}\right)T_k (((v_{1}+\varepsilon)^{p}-(v_{2}+\varepsilon)^{p})^{+})\\
			&\stackrel{\eqref{eq:Ipotesi_su_G_1}}{\leq}  (p-1) \int_{A_{k,\varepsilon}^{c}}\dfrac{T_k (((v_{1}+\varepsilon)^{p}-(v_{2}+\varepsilon)^{p})^{+})}{(v_{1}+\varepsilon)^{p}}H^{p}(\nabla v_1) \\
			& \quad + \int_{\Omega} \left(\dfrac{G_{1}(x,v_1)}{(v_1+\varepsilon)^{p-1}} - \dfrac{G_{1}(x,v_2)}{(v_2+\varepsilon)^{p-1}}\right)T_k (((v_{1}+\varepsilon)^{p}-(v_{2}+\varepsilon)^{p})^{+})\\
			&= \int_{\Omega}r_{k,\varepsilon} = \int_{\Omega} (r_{k,\varepsilon}^{+}-r_{k,\varepsilon}^{-}),
		\end{aligned}
	\end{equation}
	\noindent where $r_{k,\varepsilon}$ is as in \eqref{eq:Def_r_k_varepsilon}. Now, noticing first that by Fatou Lemma
	\begin{equation*}
		\limsup_{\varepsilon \to 0^{+}}\int_{\Omega}-r_{k,\varepsilon}^{-} \leq \int_{\Omega}-r_{k,0}^{-},
	\end{equation*}
	\noindent and recalling both \eqref{eq:Stima_basso_int_r_k_eps} and Lemma \ref{lem:limite_in_epsilon}-iv), we find
	\begin{equation*}
		0\leq \limsup_{\varepsilon \to 0^{+}}\int_{\Omega}(r_{k,\varepsilon}^{+}-r_{k,\varepsilon}^{-}) \leq \int_{\Omega}(r_{k,0}^{+}-r_{k,0}^{-}) = \int_{\Omega}r_{k,0},
	\end{equation*}
	\noindent which can be explicitly written as
	\begin{equation}\label{eq:Post_limite_in_epsilon}
		0\leq \int_{\Omega}(p-1)\dfrac{T_{k}((v_1^p-v_2^p)^{+})}{v_1^p}H^{p}(\nabla v_1)\chi_{A_{k,\varepsilon}^{c}}
		+ \int_{\Omega}\left(\dfrac{G_{1}(x,v_1)}{v_1^{p-1}}-\dfrac{G_{1}(x,v_2)}{v_2^{p-1}}\right)T_{k}((v_1^p-v_2^p)^{+}).
	\end{equation}
	In order to pass to the limit as $k \to +\infty$ in \eqref{eq:Post_limite_in_epsilon}, we first notice that 
	\begin{equation*}
		\chi_{A_{k,\varepsilon}^{c}} \to 0 \quad \textrm{ a.e. in } \Omega \textrm{ as } k \to +\infty.
	\end{equation*}
	Combining the latter with \eqref{eq:Stima_1_per_tilde_r} (with $\varepsilon =0$), we can use Lebesgue Dominated Convergence Theorem to get
	\begin{equation}\label{eq:Limite_in_k_1}
		\dfrac{T_{k}((v_1^p-v_2^p)^{+})}{v_1^p}H^{p}(\nabla v_1)\chi_{A_{k,\varepsilon}^{c}} \to 0 \quad \textrm{ in } L^{1}(\Omega) \textrm{ as } k \to +\infty.
	\end{equation}	
	We further notice that \eqref{eq:Ipotesi_su_G_1} readily implies that
	\begin{equation}\label{eq:Consequence_of_monotonicity}
		0\leq -\left(\dfrac{G_{1}(x,v_1)}{v_{1}^{p-1}}-\dfrac{G_{1}(x,v_2)}{v_{2}^{p-1}}\right) T_{k}((v_1^p-v_2^p)^{+}).
	\end{equation}
	The increasing monotonicity in $k$ of the right-hand of \eqref{eq:Consequence_of_monotonicity} allows to use Monotone Convergence Theorem yielding to
	\begin{equation}\label{eq:Limite_in_k_2}
		\lim_{k\to +\infty} \int_{\Omega} \left(\dfrac{G_{1}(x,v_1)}{v_1^{p-1}}-\dfrac{G_{1}(x,v_2)}{v_2^{p-1}}\right)T_{k}((v_1^p-v_2^p)^{+}) =  \int_{\Omega} \left(\dfrac{G_{1}(x,v_1)}{v_1^{p-1}}-\dfrac{G_{1}(x,v_2)}{v_2^{p-1}}\right)(v_1^p-v_2^p)^{+}.
	\end{equation}
	Thus, using both \eqref{eq:Limite_in_k_1} and \eqref{eq:Limite_in_k_2}, we can pass to the limit in $k$ in \eqref{eq:Post_limite_in_epsilon} finding
	\begin{equation*}\label{eq:Stima_da_un_lato}
		0\leq \int_{\Omega} \left(\dfrac{G_{1}(x,v_1)}{v_1^{p-1}}-\dfrac{G_{1}(x,v_2)}{v_2^{p-1}}\right)(v_1^p-v_2^p)^{+}.
	\end{equation*}
	On the other hand, \eqref{eq:Ipotesi_su_G_1} implies that the reverse inequality holds as well, namely
	\begin{equation*}
		\int_{\Omega} \left(\dfrac{G_{1}(x,v_1)}{v_1^{p-1}}-\dfrac{G_{1}(x,v_2)}{v_2^{p-1}}\right)(v_1^p-v_2^p)^{+}\leq 0,
	\end{equation*}
	\noindent and this is enough to conclude that
	\begin{equation*}
		(v_1^p-v_2^p)^{+} \equiv 0 \quad \textrm{ a.e. in } \Omega,
	\end{equation*}
	\noindent and this closes the proof.
\end{proof}

The proof of Theorem \ref{intro:thm:uniqueness} is now an immediate consequence of Proposition \ref{thm:Comparison_Principle}.

\section{Existence of solutions}\label{sec:Existence}

In this section we deal with existence of weak solutions to the model problem \eqref{PbMain}.
We notice that the choice
\begin{equation}\label{Fmodello}
	F(x,s)= \frac{f(x)}{s^\gamma} + h(x) s^\theta.
\end{equation}
(where $\gamma>0$, $0<\theta<p-1$ and $f,h$ are measurable functions belonging to some suitable Lebesgue spaces), satisfies \eqref{intro:eq:Ipotesi_su_F} so that an application of Theorem \ref{intro:thm:uniqueness} will assure that the weak solution to \eqref{PbMain} we find is actually the unique one.

%

For the sake of clarity, we now restate the definition of solution.
\begin{defin}
	\label{defweakmod}
	A positive function $u\in W^{1,p}_0(\Omega)$ is a weak solution to \eqref{PbMain} if $fu^{-\gamma}, h u^\theta \in L^1_{\rm loc}(\Omega)$ and if
	\begin{equation}
		\label{defweakformulationmod}
		\displaystyle \int_\Omega H^{p-1}(\nabla u) \nabla H(\nabla u) \cdot \nabla \varphi=\int_\Omega \left(\frac{f}{u^\gamma} + h u^\theta \right)\varphi, \ \forall \varphi \in C^1_c(\Omega).
	\end{equation}
\end{defin}


We recall that $\phi_1$ will denote the first eigenfunction associated to the anisotropic $p$-laplacian.

\medskip

As already mentioned in the introduction, we first prove the existence of solutions when the functions $f$ and $h$ are both possibly unbounded and $\gamma \le 1$.
Recall also that we set $(\frac{p^*}{1-\gamma})':=1$ if $\gamma=1$.
\begin{theorem}	\label{intro:thm:existencegammaminoredi1}
	Let $H$ satisfy \eqref{Hh1}-\eqref{Hh3} and let $1<p<N$. Assume that $0\le f \in L^{(\frac{p^*}{1-\gamma})'}(\Omega)$, $0\le h \in L^{(\frac{p^*}{1+\theta})'}(\Omega)$, $0 < \gamma \le 1$ and $0 \le \theta < p-1$. Then problem \eqref{PbMain} admits a unique weak solution $u \in W^{1,p}_0(\Omega)$.
\end{theorem}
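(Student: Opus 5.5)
Uniqueness follows from Theorem \ref{intro:thm:uniqueness}, since $F$ in \eqref{Fmodello} satisfies \eqref{intro:eq:Ipotesi_su_F}: $F(x,s)/s^{p-1}=f s^{-\gamma-p+1}+h s^{\theta-p+1}$ is strictly decreasing in $s$. This holds wherever $f+h>0$; if $f$ and $h$ both vanish on a set, then the comparison argument still works because the relevant integrand vanishes identically there. So the task is existence, which I would obtain by approximation. For $n\in\N$ consider
\begin{equation*}
-\Delta_p^H u_n=\frac{T_n(f)}{(u_n+\frac1n)^\gamma}+T_n(h)\,T_n(u_n)^\theta \quad\text{in }\Omega,\qquad u_n=0 \text{ on }\partial\Omega .
\end{equation*}
The right-hand side is bounded by $n^{1+\gamma}+n^{1+\theta}$, so existence of $u_n\ge0$ in $W^{1,p}_0(\Omega)\cap L^\infty(\Omega)$ follows from a Schauder fixed point argument. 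One freezes $u_n$ on the right, solves the monotone, coercive problem via Minty--Browder, and uses the bounded data for compactness.

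Next I would prove monotonicity and positivity. The map $s\mapsto (s+1/n)^{-\gamma}$ is decreasing and the data increase in $n$. Testing the difference of the equations for $u_n$ and $u_{n+1}$ with $(u_n-u_{n+1})^+$ and using the monotonicity \eqref{eq:boundbelow} should give $u_n\le u_{n+1}$. The sublinear term needs care here; alternatively one can construct the sequence iteratively to preserve monotonicity. Then $u_n\ge u_1$, and $u_1>0$ is bounded below on compacts by the strong maximum principle or Harnack inequality for the anisotropic $p$-Laplacian, since its right-hand side is at least $T_1(f)(\|u_1\|_\infty+1)^{-\gamma}\not\equiv0$. Hence for every $\omega\subset\subset\Omega$ there is $c_\omega>0$ with $u_n\ge c_\omega$ on $\omega$ for all $n$.

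The main step is a uniform $W^{1,p}_0$ bound, obtained by testing with $u_n$ and using \eqref{controlloH}:
\begin{equation*}
\alpha^p\int_\Omega|\nabla u_n|^p\le\int_\Omega f u_n^{1-\gamma}+\int_\Omega h u_n^{1+\theta}.
\end{equation*}
For $\gamma<1$, H\"older with $f\in L^{(p^*/(1-\gamma))'}$ and Sobolev give $\int f u_n^{1-\gamma}\le \|f\|\,\|u_n\|_{p^*}^{1-\gamma}\le C\|\nabla u_n\|_p^{1-\gamma}$. For $\gamma=1$ this term is simply $\le\|f\|_1$. Similarly $\int h u_n^{1+\theta}\le C\|\nabla u_n\|_p^{1+\theta}$. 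Since $1-\gamma<p$ and $1+\theta<p$, Young's inequality absorbs both terms and gives $\|u_n\|_{W^{1,p}_0}\le C$. Then $u_n\rightharpoonup u$ weakly in $W^{1,p}_0$, and by monotonicity $u_n\uparrow u$ a.e., with $u\ge c_\omega$ on each $\omega\subset\subset\Omega$.

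The expected obstacle is passing to the limit in the nonlinear term $H^{p-1}(\nabla u_n)\nabla H(\nabla u_n)$, which requires a.e.\ convergence of the gradients. I would handle it with a local Boccardo--Murat argument. Take $\varphi\in C^1_c(\Omega)$ with $0\le\varphi\le1$ and test with $(u_n-u)\varphi$. On $\operatorname{supp}\varphi$ the right-hand side is bounded by $f c_\omega^{-\gamma}+h u^\theta$, which lies in $L^1$ and in fact pairs well with $u_n-u$. Since $u_n\to u$ strongly in $L^p$ and, being monotone and dominated, in $L^{p^*}$-compatible norms, one should get
\begin{equation*}
\int_\Omega\big(H^{p-1}(\nabla u_n)\nabla H(\nabla u_n)-H^{p-1}(\nabla u)\nabla H(\nabla u)\big)\cdot\nabla(u_n-u)\,\varphi\to0 .
\end{equation*}
By \eqref{eq:boundbelow}, $\nabla u_n\to\nabla u$ a.e.\ up to subsequences. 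The flux is bounded in $L^{p'}$ by \eqref{eq:gradHlimitato}, so Vitali's theorem lets us pass to the limit on the left of \eqref{defweakformulationmod}. On the right, the integrand is dominated on $\operatorname{supp}\varphi$ by $(f c_\omega^{-\gamma}+h u^\theta)\|\varphi\|_\infty\in L^1$, and dominated convergence applies. This shows that $u$ is the desired weak solution, with $fu^{-\gamma},hu^\theta\in L^1_{\rm loc}(\Omega)$.
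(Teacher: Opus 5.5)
Your overall scheme matches the paper's: truncated approximations, the energy bound from testing with $u_n$ (H\"older--Sobolev for $\gamma<1$, $\|f\|_{L^1(\Omega)}$ for $\gamma=1$), a.e.\ convergence of gradients, and uniqueness from Theorem \ref{intro:thm:uniqueness}. There is, however, a genuine gap at the step you flag yourself, the monotonicity $u_n\le u_{n+1}$. Everything downstream rests on it: the uniform bound $u_n\ge c_\omega$, the domination $T_n(h)T_n(u_n)^\theta\le hu^\theta$, and the gradient step. Testing with $(u_n-u_{n+1})^+$ and using \eqref{eq:boundbelow} does not give it. On $\{u_n>u_{n+1}\}$ the singular terms have the good sign, but $T_n(h)T_n(u_n)^\theta-T_{n+1}(h)T_{n+1}(u_{n+1})^\theta$ may be positive, because the $h$-term is increasing in $u$. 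Nothing small is available to absorb it, and ``construct the sequence iteratively'' is not an argument. There are two repairs.
(a) Compare with the purely singular approximations $-\Delta_p^H v_n=T_n(f)(v_n+\frac1n)^{-\gamma}$. Since $u_n$ is a supersolution of this problem and $s\mapsto T_n(f)(s+\frac1n)^{-\gamma}$ is nonincreasing, the plain test with $(v_n-u_n)^+$ gives $v_n\le u_n$, and the same test gives $v_n\le v_{n+1}$. Hence $u_n\ge v_1\ge c_\omega$ on $\omega\subset\subset\Omega$.
(b) Obtain $u_n\le u_{n+1}$ from the Picone/D\'{\i}az--Saa-type Proposition \ref{thm:Comparison_Principle}, applied with $G_n(x,s)=T_n(f)(s+\frac1n)^{-\gamma}+T_n(h)T_n(s)^\theta$. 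This works because $G_n\le G_{n+1}$ and $G_n(x,s)/s^{p-1}$ is strictly decreasing where $f+h>0$; as stated, this route therefore needs $f+h>0$ a.e.

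Second, your justification that $\int_\Omega\big(H^{p-1}(\nabla u_n)\nabla H(\nabla u_n)-H^{p-1}(\nabla u)\nabla H(\nabla u)\big)\cdot\nabla(u_n-u)\,\varphi\to0$ is not right as written. The function $f$ lies only in $L^{p^*/(p^*-1+\gamma)}(\Omega)$, strictly below $L^{(p^*)'}(\Omega)$, and only in $L^1$ when $\gamma=1$. Meanwhile $u_n-u$ is controlled at best in $L^{p^*}$, so $\int_{\operatorname{supp}\varphi}f\,|u_n-u|\to0$ does not follow. Under route (b) you do not need it: $G_n\ge0$ and $u_n\le u$ give $\int_\Omega G_n(x,u_n)(u_n-u)\varphi\le0$. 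Hence the $\limsup$ of the integral above is $\le0$, and the rest of your argument (Vitali on the flux, domination by $fc_\omega^{-\gamma}+hu^\theta$) goes through. Under route (a) you lose $u_n\le u$. Then you must use truncated test functions $T_k(u_n-u)\varphi$, i.e.\ the Boccardo--Murat theorem, which needs only an $L^1_{\rm loc}$ bound on the right-hand side; this is exactly what the paper invokes. The $h$-term must then be handled by Vitali rather than by domination with $hu^\theta$. Finally, a minor point on uniqueness. On $\{f=h=0\}$ the vanishing of the integrand in the last step of the comparison proof is precisely why that step gives no information there, so your remark does not settle that case. The paper is equally terse on this point.
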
   
A second interest deals with the behaviour of bounded solutions to \eqref{PbMain} when both $f,h$ are bounded.
This result is a suitable extension of the one contained in \cite{lazer}.
In order to derive the estimates we need in the non-linear case, borrowing their approach and ideas, we re-adapt their argument also inspired by \cite{ES, OPsurvey}. We also refer to \cite{MonSciuTro} for the case of anisotropic $p$-laplacian where the authors prove an analogous result when $f=1$ and in presence of the Hardy potential.
\begin{theorem}\label{thm:estbelowabove} 
	Let $H$ satisfy \eqref{Hh1}-\eqref{Hh3} and let $1<p<N$. Let $f \in L^\infty(\Omega)$ such that $f>c>0$ for some positive constant $c$, let $0 \leq h \in L^\infty(\Omega)$, $0\le \gamma<2 +1/(p-1)$, $\theta\ge 0$.
	Let $u$ be a bounded weak solution $u$ to 
	\eqref{PbMain}. Then there exist two positive constants $s_1$, $s_2$ such that
	\begin{equation} \label{estimateOnU}
		\begin{cases}
			s_1 \phi_1(x)^{\frac{p}{\gamma+p-1}} \leq u(x) \leq s_2
			\phi_1(x)^{\frac{p}{\gamma+p-1}} \ &\text{if }\gamma>1,
			\\
			s_1 \phi_1(x) \leq u(x) \leq s_2
			\phi_1(x)^t \ &\text{if }\gamma\le1, 
		\end{cases}
	\end{equation}
	for almost every $x \in
	\Omega$ and for any $\displaystyle t\in \left(\frac{p-1}{p},1\right)$.
\end{theorem}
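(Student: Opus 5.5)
The plan is to build explicit sub- and supersolutions of the form $w=s\,\phi_1^{a}$, with $a=\frac{p}{\gamma+p-1}$ when $\gamma>1$, $a=1$ for the subsolution and $a=t$ for the supersolution when $\gamma\le 1$. Each is then compared with $u$ through the comparison principle, Proposition \ref{thm:Comparison_Principle}, or its weak sub/supersolution variant obtained with the same Brezis--Oswald test functions of Lemma \ref{lem:giuste_test}. The first step is the computation of $-\Delta_p^H(\phi_1^a)$. Using \eqref{eq:grad0omog} and \eqref{Hh2} we have $\nabla(\phi_1^a)=a\phi_1^{a-1}\nabla\phi_1$, $H(\nabla \phi_1^a)=a\phi_1^{a-1}H(\nabla\phi_1)$ and $\nabla H(\nabla\phi_1^a)=\nabla H(\nabla\phi_1)$. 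Expanding the divergence with \eqref{einvaluprob} and \eqref{eq:eulero} then gives
\begin{equation*}
-\Delta_p^H(\phi_1^a)=a^{p-1}\Big[\lambda_1\phi_1^{(a-1)(p-1)+p-1}+(1-a)(p-1)\phi_1^{(a-1)(p-1)-1}H^p(\nabla\phi_1)\Big].
\end{equation*}
For $a<1$ both terms are nonnegative. By the Hopf lemma \cite[Theorem 4.5]{CSR} and $\phi_1\in C^{1,\sigma}(\overline\Omega)$, there are a neighbourhood $\Omega_\delta$ of $\partial\Omega$ and constants with $H(\nabla\phi_1)\ge c_0>0$ there, while $\phi_1\ge c_\delta>0$ on $\Omega\setminus\Omega_\delta$. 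Hence $-\Delta_p^H(\phi_1^a)\simeq \phi_1^{(a-1)(p-1)-1}$ near the boundary, and it is bounded above and below by positive constants inside.

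\emph{Case $\gamma>1$.} With $a=\frac{p}{\gamma+p-1}\in(0,1)$ the exponents match: $(a-1)(p-1)-1=-a\gamma$, so the singular term of $-\Delta_p^H(s\phi_1^a)$ behaves like $s^{p-1}\phi_1^{-a\gamma}$, while $f(s\phi_1^a)^{-\gamma}\simeq s^{-\gamma}\phi_1^{-a\gamma}$. The sublinear term $h(s\phi_1^a)^\theta$ is bounded. For $s=s_2$ large, $-\Delta_p^H(s_2\phi_1^a)\ge C_1 s_2^{p-1}\phi_1^{-a\gamma}\ge \|f\|_\infty s_2^{-\gamma}\phi_1^{-a\gamma}+\|h\|_\infty s_2^\theta\phi_1^{a\theta}$, using that $\phi_1^{-a\gamma}$ is bounded below, $\theta<p-1$ (or $\theta\ge0$ arbitrary, absorbed since the bound is only needed against the bounded $u$, see below), and $s_2^{p-1}\gg s_2^{-\gamma}$. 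For $s=s_1$ small, $-\Delta_p^H(s_1\phi_1^a)\le C_2s_1^{p-1}\phi_1^{-a\gamma}\le c\,s_1^{-\gamma}\phi_1^{-a\gamma}\le f(s_1\phi_1^a)^{-\gamma}$, so we get a subsolution even with $h\ge0$ dropped. The condition $\gamma<2+\frac1{p-1}$ is exactly $a>\frac{p-1}{p}$, i.e.\ $p(a-1)>-1$, so by \eqref{phi_integrabilita} $\phi_1^a\in W^{1,p}_0(\Omega)$ and the comparison is legitimate.

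\emph{Case $\gamma\le 1$.} For the lower bound we use $a=1$: $-\Delta_p^H(s_1\phi_1)=\lambda_1s_1^{p-1}\phi_1^{p-1}\le c s_1^{-\gamma}\|\phi_1\|_\infty^{-\gamma}\le f(s_1\phi_1)^{-\gamma}$ for $s_1$ small. For the upper bound we take $a=t\in(\frac{p-1}{p},1)$. Then $(t-1)(p-1)-1<-1\le -t\gamma$, so the singular part $s_2^{p-1}\phi_1^{(t-1)(p-1)-1}$ dominates $\|f\|_\infty s_2^{-\gamma}\phi_1^{-t\gamma}$ near $\partial\Omega$ for $s_2$ large, and $t>\frac{p-1}{p}$ again guarantees $\phi_1^t\in W^{1,p}_0(\Omega)$.

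The main obstacle is the comparison step, since $u$ is only a weak solution and the supersolution is not of the form required by Proposition \ref{thm:Comparison_Principle} when $\theta$ is large. To handle it, I would exploit boundedness of $u$ and replace the nonlinearity by $G(x,s)=f s^{-\gamma}+h\,(\min\{s,\|u\|_\infty\})^\theta$, choosing $s_2$ also so large that $\|h\|_\infty\|u\|_\infty^\theta$ is absorbed; when $\theta<p-1$, $G(x,s)/s^{p-1}$ is strictly decreasing. I would then rerun the proof of Proposition \ref{thm:Comparison_Principle} with $v_2=s_2\phi_1^a$ a weak supersolution (inequality $\ge$ for nonnegative test functions, which is all that is used there since $\psi_2\ge0$) and $v_1=u$. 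For the lower bound the roles are reversed, with $v_1=s_1\phi_1^a$ a subsolution. If $\theta\ge p-1$, I would instead compare on the purely singular part, using monotonicity of $s^{-\gamma}/s^{p-1}$ and treating $hu^\theta\le\|h\|_\infty\|u\|_\infty^\theta$ as a bounded datum already absorbed in the supersolution inequality. Checking that the $\varepsilon$-regularized test functions remain admissible for the supersolution, via Lemma \ref{lem:extensiontest}, is the technical point to verify.
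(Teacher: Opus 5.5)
Your proposal is correct, and its analytic core is the paper's: the same barriers $s\phi_1^{a}$ (with $a=\frac{p}{\gamma+p-1}$ if $\gamma>1$, and $a=1$, $a=t$ if $\gamma\le 1$), the same formula for $-\Delta_p^H(\phi_1^a)$, the Hopf lemma near $\partial\Omega$, and $a>\frac{p-1}{p}\iff\gamma<2+\frac1{p-1}$ for $W^{1,p}_0$-membership. The two arguments differ only in the comparison step.

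\emph{The paper's route.} It freezes the whole right-hand side multiplicatively: $-\Delta_p^H u=g(x)u^{-\gamma}$ with $g=f+hu^{\gamma+\theta}$. Boundedness of $u$ and $f>c$ then give $0<m\le g\le M$. Each barrier is an exact weak solution of $-\Delta_p^H w=q(x,s)w^{-\gamma}$. Choosing $q(x,s_1)\le m\le g\le M\le q(x,s_2)$ orders the nonlinearities $q(x,s_i)\sigma^{-\gamma}$ and $g(x)\sigma^{-\gamma}$ for every $\sigma>0$. Since $\sigma\mapsto g(x)\sigma^{1-p-\gamma}$ is strictly decreasing, Proposition \ref{thm:Comparison_Principle} applies verbatim, for all $\theta\ge0$ at once.

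\emph{Why you need more.} With an additive datum such as your $k=hu^\theta$, this verbatim comparison is impossible, because $f\sigma^{-\gamma}+k\le Q(x)\sigma^{-\gamma}$ fails for large $\sigma$. That is why you only have inequalities at $\sigma=w(x)$, and hence need a sub/supersolution version of the Brezis--Oswald argument. That version does go through. Admissibility of $\psi_1,\psi_2$ is fine: your barriers are exact solutions of purely singular problems, so Lemma \ref{lem:extensiontest} applies to them. However, when $v_1=s_1\phi_1^a$ is the subsolution, you must also check by hand the integrability $G(x,v_1)v_1\in L^1(\Omega)$, which Lemma \ref{lem:limite_in_epsilon} otherwise extracts from the equation. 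It holds because $a(\gamma-1)<1$; for $\gamma>1$ this is again equivalent to $\gamma<2+\frac1{p-1}$, and it is trivial for $\gamma\le 1$.

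\emph{What your route buys.} It is a bit more flexible. The additive datum $G(x,\sigma)=f\sigma^{-\gamma}+hu^\theta$ already handles every $\theta\ge0$, so the truncation at $\|u\|_\infty$ is superfluous. For $\theta<p-1$ you can even take $G(x,\sigma)=f\sigma^{-\gamma}+h\sigma^\theta$ itself. Then $s_1,s_2$ depend only on the data and boundedness of $u$ is never used, which is slightly more than the paper's statement.
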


\begin{remark}
Theorem \ref{thm:estbelowabove} applies to bounded solutions and we will employ it to approximate problems where this property is guaranteed. By the way, it is worth to remark that, even if the  $L^\infty$-regularity is outside the scope of the present paper, the solution to \eqref{PbMain} can be shown to be bounded under standard hypotheses on the data.
Indeed, if $f,h \in L^s(\Omega)$ with $s > N/p$, the unique weak solution is bounded and this can be shown by suitably adapting the truncation technique detailed in \cite{BoOr} which is shown for the purely $p$-sublinear case. In particular, the argument relies on using $G_k(u)$ as test function in the weak formulation to derive decay estimates for the super-level sets, eventually invoking Stampacchia’s lemma. 
\end{remark}

\subsection{A Variational approach for case $\gamma>1$}

One more result concerns a variational approach to the existence of weak solutions to \eqref{PbMain}. The result we present in this section is an extension of the one given in \cite{sz} in case of the laplacian operator. 
In particular, a necessary and sufficient condition is shown to hold to have a unique weak solution to \eqref{PbMain} when $0<f\in L^1(\Omega)$ is such that
\begin{equation}\label{eq:compatibility}
	\int_{\Omega} f|u_0|^{1-\gamma}  < +\infty,
\end{equation}
for some $u_0 \in W^{1,p}_0(\Omega)$. Let us also stress that the next theorem will be used to prove the existence result given  by Theorem \ref{intro:thm:existence1} and \ref{intro:thm:existence2}.

\begin{theorem}\label{thmexistence1}
	Let $H$ satisfy \eqref{Hh1}-\eqref{Hh3} and let $1<p<N$. Let $\gamma>1$, $0<f\in L^1(\Omega)$, $0\leq h \in L^\infty(\Omega)$, $0\leq \theta<p-1$.
	Then there exists a unique weak solution to \eqref{PbMain} if and only if there exists a function $u_0\in W^{1,p}_0(\Omega)$ such that  \eqref{eq:compatibility} yields.
\end{theorem}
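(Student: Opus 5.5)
Uniqueness is already settled by Theorem \ref{intro:thm:uniqueness}, since $F(x,s)=f s^{-\gamma}+h s^\theta$ satisfies \eqref{intro:eq:Ipotesi_su_F}. So the content is the equivalence between existence and \eqref{eq:compatibility}.

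\emph{Necessity.} If $u$ is a weak solution, take $u_0=u$. By Lemma \ref{lem:extensiontest} we may test the equation with $u$ itself. This gives
\[
\int_\Omega H^p(\nabla u)=\int_\Omega f u^{1-\gamma}+\int_\Omega h u^{1+\theta}.
\]
Since the left-hand side is finite and both terms on the right are nonnegative, we get $\int_\Omega f u^{1-\gamma}<+\infty$.

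\emph{Sufficiency.} This direction is variational, following \cite{sz}. Consider the energy
\[
I(v)=\frac1p\int_\Omega H^p(\nabla v)+\frac{1}{\gamma-1}\int_\Omega f|v|^{1-\gamma}-\frac{1}{1+\theta}\int_\Omega h|v|^{1+\theta}
\]
on $W^{1,p}_0(\Omega)$, with values in $(-\infty,+\infty]$.

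\begin{enumerate}
\item The functional is finite at $u_0$, since $\int_\Omega f|u_0|^{1-\gamma}<+\infty$.
\item It is coercive. The sublinear term is bounded by $C\|v\|^{1+\theta}$ with $1+\theta<p$, using $h\in L^\infty$, Poincaré and \eqref{controlloH}.
\item It is weakly lower semicontinuous. The gradient part is convex; the singular part is handled by Fatou along an a.e.\ convergent subsequence; the sublinear part converges by compact embedding.
\end{enumerate}

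Hence a minimizer $u$ exists. We may assume $u\ge 0$, because $I(|v|)=I(v)$ by evenness of $H$. Moreover $u>0$ a.e., since $I(u)<\infty$ and $f>0$.

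The main obstacle is deriving the Euler--Lagrange equation, because $I$ is not differentiable: the term $|v|^{1-\gamma}$ blows up where $v$ is small. I would proceed in three steps.

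\emph{Step 1: one-sided inequality for nonnegative directions.} For $\psi\ge0$ in $W^{1,p}_0(\Omega)$ and $t>0$, use $I(u+t\psi)\ge I(u)$. The singular term decreases along this perturbation, so one can divide by $t$ and apply Fatou and monotone convergence. This yields $f u^{-\gamma}\psi\in L^1$ and
\[
\int_\Omega H^{p-1}(\nabla u)\nabla H(\nabla u)\cdot\nabla\psi\ \ge\ \int_\Omega \big(fu^{-\gamma}+hu^\theta\big)\psi .
\]

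\emph{Step 2: the variation $(1+t)u$.} Consider the map $t\mapsto I((1+t)u)$ near $t=0$. It is smooth in $t$ and has a minimum at $t=0$. This gives the identity
\[
\int_\Omega H^p(\nabla u)=\int_\Omega f u^{1-\gamma}+\int_\Omega h u^{1+\theta}.
\]

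\emph{Step 3: the reverse inequality.} Use the Brezis--Oswald / Canino--Degiovanni trick: plug $\psi=(u+\varepsilon\varphi)^+$ into Step 1, with $\varphi\in C^1_c(\Omega)$. Then subtract the identity from Step 2. Next, control the contribution of the set $\{u+\varepsilon\varphi<0\}$. There the terms are $O(\varepsilon)$ times integrals over a set whose measure tends to $0$, so they vanish after dividing by $\varepsilon$. This uses $H^{p-1}(\nabla u)\nabla H(\nabla u)\in L^{p'}$, \eqref{eq:gradHlimitato}, and the fact that $f u^{-\gamma}|\varphi|$ is locally integrable. The outcome is the $\le$ inequality for every $\varphi$. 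Replacing $\varphi$ by $-\varphi$ gives the weak formulation \eqref{defweakformulationmod}.

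To make Step 1 rigorous I would additionally check that $f u^{-\gamma}\in L^1_{\rm loc}$. This follows by testing Step 1 with $\psi\in C^1_c(\Omega)$, $\psi\ge0$.

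If this direct approach is troublesome in the anisotropic setting, there is a fallback: approximate by the truncated problems with $f_n=T_n(f)$ and $(u+1/n)^{-\gamma}$, which are bounded and strictly positive. Their energies are bounded uniformly by comparison with $I(u_0)$, and one passes to the limit using Proposition \ref{prop:convgrad} to obtain strong convergence of the gradients.
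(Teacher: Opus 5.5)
Your proof is correct, and for sufficiency it takes a genuinely different and simpler route than the paper.

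\textbf{The paper's route.} The paper minimizes $J$ only over the constraint set $\mathcal{N}$ and uses Ekeland's principle to produce an almost-minimizing sequence $u_n$. It then has to split into two cases, $u_n\in\mathcal{N}\setminus\mathcal{N}^*$ and $u_n\in\mathcal{N}^*$. In the second case it projects $u_n+t\varphi$ back onto $\mathcal{N}^*$ through $\lambda(t,n,\varphi)$ and proves bounds on $\lambda'(0,n,\varphi)$ that are uniform in $n$. Finally it needs the anisotropic strong convergence of gradients (Proposition \ref{prop:convgrad}) to pass to the limit in $\int_\Omega H^{p-1}(\nabla u_n)\nabla H(\nabla u_n)\cdot\nabla\varphi$ and reach \eqref{disvar}. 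Membership $u\in\mathcal{N}^*$ comes from the chain of equalities \eqref{uNstar}.

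\textbf{Your route.} You observe that under the present hypotheses none of this machinery is needed. Because $\gamma>1$, the singular term is nonnegative and weakly lower semicontinuous by Fatou. Because $\theta<p-1$, $I$ is coercive on all of $W^{1,p}_0(\Omega)$. So the direct method yields an exact global minimizer. At an exact minimizer:
\begin{itemize}
\item the one-sided variations $u+t\psi$ with $\psi\ge 0$, which do not increase the singular term, give \eqref{disvar} at once;
\item the fibering map $s\mapsto I(su)$ is smooth, since all three integrals are finite, and it gives $u\in\mathcal{N}^*$.
\end{itemize}
Your Step 3 is then the same $(u+\varepsilon\varphi)^+$ argument the paper uses at the end.

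\textbf{What each buys.} You avoid Ekeland, the $\lambda(t,n,\varphi)$ estimates and Proposition \ref{prop:convgrad} entirely. The paper's constrained framework follows \cite{sz} and is set up in a Nehari-type form that adapts to other perturbations. Under the present $p$-sublinear hypotheses, your direct argument is shorter and self-contained. The fallback approximation scheme you mention is unnecessary.

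\textbf{One small correction in Step 3.} Set $E_\varepsilon=\{u+\varepsilon\varphi<0\}$. Not every contribution from $E_\varepsilon$ is $O(\varepsilon)$.
\begin{itemize}
\item The term $-\int_{E_\varepsilon}H^p(\nabla u)$ is not small in $\varepsilon$. It must be discarded by its sign, since it sits on the larger side of the inequality; this is exactly what the paper does.
\item The singular and sublinear terms on $E_\varepsilon$ are genuinely $O(\varepsilon)$. There $0<u<\varepsilon|\varphi|$, so they are bounded by $\varepsilon\int_{E_\varepsilon}(fu^{-\gamma}+hu^\theta)|\varphi|$. Alternatively, they too can be dropped by sign.
\end{itemize}
After dividing by $\varepsilon$, only $\int_{E_\varepsilon}H^{p-1}(\nabla u)\nabla H(\nabla u)\cdot\nabla\varphi$ requires the vanishing-measure argument.
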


\begin{proof}
	First, we show that if $u$ is a weak solution to \eqref{PbMain}, then  \eqref{eq:compatibility} holds with $u_0 = u$. 
	
	To this end, we observe that, by Lemma \ref{lem:extensiontest}, one can choose $u$ as a test function in the weak formulation of \eqref{PbMain}, obtaining 
	\begin{equation*}
		\int_\Omega H^{p-1}(\nabla u)\nabla H(\nabla u) \cdot \nabla u = \int_\Omega f u^{1-\gamma} \, + \int_\Omega h u^{\theta+1}.
	\end{equation*}
	We point out that the second term on the right-hand of the previous is finite, since $0 \le \theta<p-1$ and $0 \leq  h\in L^\infty(\Omega)$, and by H\"older inequality with conjugate exponents $(p/(\theta+1),p/(p-1-\theta))$ it holds
	\begin{equation}\label{eq:bddk}
		\int_\Omega h u^{\theta+1} \leq \|h\|_{L^\infty(\Omega)} |\Omega|^{\frac{p-1-\theta}{p}} \|u\|_{L^p(\Omega)}^{\theta+1} < +\infty.
	\end{equation}
	Using \eqref{controlloH} and \eqref{eq:eulero}, we get
	\begin{equation*}
		\int_\Omega f u^{1-\gamma} \, = \int_\Omega H^{p}(\nabla u) - \int_\Omega h u^{\theta+1} \leq \int_\Omega H^{p}(\nabla u) \leq \beta^p \int_\Omega |\nabla u|^p< +\infty		
	\end{equation*}
	which is \eqref{eq:compatibility} with $u_0=u$. 
	
	\medskip

	Conversely we assume that there exists $u_0\in W^{1,p}_0(\Omega)$ satisfying \eqref{eq:compatibility} and we focus on proving the existence of a unique solution $u\in W^{1,p}_0(\Omega)$ to \eqref{PbMain}. 
	Let us stress that, in order to prove the sufficient part of the proof, the strategy relies on a variational approach: we look for solutions as critical points of the associated energy functional $J$ constrained to the manifold $\mathcal{N}$ which are defined by
	$$J(u) := \frac{1}{p}\int_{\Omega}H^p(\nabla u) + \frac{1}{\gamma-1}\int_{\Omega} f|u|^{1-\gamma} - \frac{1}{\theta+1} \int_\Omega h|u|^{\theta+1},$$
	and 
	\begin{equation*}
		\mathcal{N}:=\left\{u\in W^{1,p}_0(\Omega) : \int_{\Omega}H^p(\nabla u)  - \int_{\Omega} f|u|^{1-\gamma} - \int_\Omega h|u|^{\theta+1} \, \ge 0 \right\}.
	\end{equation*}	
	Moreover we set
	\begin{equation*}
		\mathcal{N^*}:=\left\{u\in W^{1,p}_0(\Omega) :  \int_{\Omega} H^p(\nabla u)  - \int_{\Omega} f|u|^{1-\gamma} - \int_\Omega h|u|^{\theta+1} \, = 0 \right\},
	\end{equation*}
	which is a natural constraint to which any weak solution belongs.
	
	\medskip
	
	Let us point out that, from here on, the idea is to apply the Ekeland Variational Principle to secure a minimizing sequence $u_n$ for $J$ belonging to $\mathcal{N}$. Then the aim is to show that this sequence is bounded in $W_0^{1,p}(\Omega)$ and, up to a subsequence, it converges weakly to a limit function $u$ which is a weak solution to \eqref{PbMain}. 
	
	\medskip
	
	We start by establishing the structural properties of the constraint  $\mathcal{N}$, namely we show that it is non-empty, unbounded, closed, and bounded away from the origin.

	First observe that, for any $u\in W^{1,p}_0(\Omega)$ such that $\int_{\Omega}f |u|^{1-\gamma}<+\infty$  and as \eqref{eq:bddk} is in force, one can analyze the energy functional along the curve $t \mapsto tu$ for $t>0$; then one has that $J(tu) \to +\infty$ as $t \to 0^+$ and as $t \to +\infty$. Consequently, there exists $t(u) > 0$ such that
	\begin{equation}\label{stimabasso}
	J(tu) \geq J(t(u)u) \quad \forall t > 0,
	\end{equation}
	and also satisfying $t(u)u \in \mathcal{N}^*$. Furthermore, for all $t \ge t(u)$, one simply has $tu \in \mathcal{N}$, showing that $\mathcal{N}$ is unbounded. Therefore, as \eqref{eq:compatibility}, one can apply the above argument to $u_0$, providing also that $\mathcal{N}^*$ is non-empty.

   The set $\mathcal{N}$ is also closed as it can be simply proved by an application of the Fatou Lemma.
	Moreover it holds
	$$\int_{\Omega} |\nabla v_n|^p \ge c>0,$$
	for any sequence $v_n \in \mathcal{N}$ where $c$ is independent of $n$.
	Indeed, assuming by contradiction that $v_n$ tends to zero in $W^{1,p}_0(\Omega)$ then, by the reverse H\"older inequality (with indexes ($1/\gamma, 1/(\gamma-1)$)), one yields to 
	\[
	0 \le \left(\int_{\Omega} f^\frac{1}{\gamma}\right)^\gamma\left(\int_{\Omega}|v_n| \right)^{1-\gamma}  \le \int_{\Omega} f|v_n|^{1-\gamma} + \int_\Omega h |v_n|^{\theta+1} \le \int_{\Omega}H^p(\nabla v_n) \stackrel{\eqref{controlloH}}{\le} \beta^p \int_{\Omega}|\nabla v_n|^p \rightarrow 0,
	\]
	as, we recall, $v_n\in \mathcal{N}$. Since $f$ is not identically zero and $\gamma>1$, the previous implies  $\int_{\Omega}|v_n|$ blows up providing a contradiction.

	Now observe that the functional $J$ is bounded from below on $\mathcal{N}$ and, thanks to the Ekeland's variational Principle, there exists a minimizing sequence $u_n\in \mathcal{N}$ such that
	\begin{equation}\label{eke1}
		\displaystyle J(u_n) \le \inf_\mathcal{N}J +\frac{1}{n},
	\end{equation}
	\begin{equation}\label{eke2}
\displaystyle J(u_n)- J(w) \le \frac{\left(\displaystyle \int_{\Omega}|\nabla (u_n- w)|^p\right)^{\frac{1}{p}}}{n}, \ \forall w\in \mathcal{N}.
	\end{equation}
	Recalling that $H$ is even, we may assume $u_n\ge 0$ as $J(|u_n|)= J(u_n)$. Furthermore, as $\int_{\Omega}fu_n^{1-\gamma} + \int_\Omega h u_n^{\theta+1} \le \beta \int_{\Omega}|\nabla u_n|^p$ (recall \eqref{controlloH}) then $u_n>0$ a.e. in $\Omega$. We also note that $u_n$ is bounded in $W^{1,p}_0(\Omega)$ with respect to $n$ as it simply follows from \eqref{eke1} and from \eqref{controlloH}. Now we finally stress that we denote by $u$ the a.e.~limit (up to a subsequence) of the sequence $u_n$ as $n\to+\infty$.
	 Moreover, also notice that it follows from \eqref{eke1} and from the Fatou Lemma that
	\begin{equation*}
		\int_{\Omega} fu^{1-\gamma} \le \liminf_{n \to \infty} \int_{\Omega} fu_n^{1-\gamma} < C,
	\end{equation*}
	where $C$ does not depend on $n$ as $u_n$ is bounded in $W^{1,p}_0(\Omega)$ with respect to $n$. In particular this implies that $u > 0$ a.e. in $\Omega$ as $\gamma > 1$ and $f>0$ a.e. in $\Omega$ and that \eqref{stimabasso} is in force.
	Assume for the moment that $u\in \mathcal{N}$; this actually implies that $t \mapsto J(tu)$ admits a unique minimum at $t=1$.	
	Therefore, by also using weak lower semicontinuity and the Fatou Lemma, we get
	\begin{equation}\label{uNstar}
		\begin{split}
			\displaystyle \inf_{\mathcal{N}}J &= \lim_{n\to +\infty}J(u_n) \ge \frac{1}{p} \int_{\Omega}H^p(\nabla u) + \frac{1}{\gamma-1}\int_{\Omega} fu^{1-\gamma} - \frac{1}{\theta+1} \int_\Omega h u^{\theta+1}\\
			&= J(u)\ge J(t(u)u) \ge \inf_{\mathcal{N}^*} J \ge \inf_{\mathcal{N}} J,
		\end{split}
	\end{equation}
	where, as $u_n$ is bounded with respect to $n$ in $W^{1,p}_0(\Omega)$, we also employed that it converges, by standard arguments and up to a subsequence, to $u$ in $L^{q}(\Omega)$ for every $q\in \left[1,\tfrac{pN}{N-p}\right)$, which allows to pass to the limit the term involving $h$. 
	Consequently, the previous inequalities become equalities and it is not difficult to show that $\| H(\nabla u_n)\|_{L^p(\Omega)}$ converges to $\| H(\nabla u)\|_{L^p(\Omega)}$ as $n \to +\infty$. Let us also stress that this allows to apply Proposition \ref{prop:convgrad} in the sequel. In particular, \eqref{uNstar} also gives that $t(u)=1$ which implies that $u \in \mathcal{N}^*$.
		
	\medskip
	
	Then, in order to conclude the previous discussion, we still need to show that $u\in \mathcal{N}$, which follows once we fix $\varphi=u$ into: 
		\begin{equation}\label{disvar}
		\int_{\Omega} H^{p-1}(\nabla u) \nabla H(\nabla u) \cdot \nabla \varphi \ge \int_{\Omega}\frac{f\varphi}{u^\gamma}+\int_{\Omega} h u^{\theta} \varphi, \ \forall \varphi \in W^{1,p}_{0}(\Omega) \text{ with } \varphi\ge 0.
	\end{equation}
	We prove \eqref{disvar} by distinguishing two cases, depending on whether (up to a subsequence) the minimizing sequence definitely lies entirely in the interior $\mathcal{N} \setminus \mathcal{N}^*$ or on the boundary $\mathcal{N}^*$.
	
	\medskip

	\emph{i) $u_n$ belongs to $\mathcal{N}\setminus\mathcal{N}^*$ for a sufficiently large $n$.} 	
	
	\smallskip

	Let $t>0$ and, as $u_n$ belongs to $\mathcal{N}\setminus\mathcal{N}^*$ definitively in $n$ and $\gamma>1$, one yields to 
	$$ \int_{\Omega} f (u_n+t\varphi)^{1-\gamma} \le \int_{\Omega} f u_n^{1-\gamma} <\int_{\Omega} H^p(\nabla u_n)-\int_\Omega h u_n^{\theta+1}.$$
	Furthermore, requiring $t$ sufficiently small and thanks to the continuity of $\int_{\Omega} H^p(\nabla (u_n +t\varphi))$ (recall that $H$ is a norm) and of $\int_\Omega h (u_n + t\varphi)^{\theta+1}$ in $t$, from the the previous inequality one deduces   	
	$$ \int_{\Omega} f (u_n+t\varphi)^{1-\gamma} <\int_{\Omega} H^p(\nabla (u_n +t\varphi)) - \int_\Omega h(u_n+t\varphi)^{\theta+1},$$
	which means $u_n+t\varphi \in \mathcal{N}$ for $t$ small enough.

	Now we apply \eqref{eke2} with $w=u_n+t\varphi$ in order to get
	\begin{equation}\label{eke3}
		\begin{split}
		\frac 1p \left(\int_\Omega \left[H^p(u_n) - H^p(u_n+t\varphi)\right]\right) & - \frac{1}{\theta+1} \left(\int_{\Omega} h\left[u_n^{\theta+1}-(u_n+t\varphi)^{\theta+1}\right]\right)\\
		& +\frac{1}{\gamma-1} \left(\int_\Omega f \left[u_n^{1-\gamma} - (u_n+t\varphi)^{1-\gamma}\right]\right) \le \frac{\left(\displaystyle \int_{\Omega}|\nabla (t\varphi)|^p\right)^\frac{1}{p}}{n}.
		\end{split}
	\end{equation}
	Dividing by $t$ both sides of \eqref{eke3} and taking the liminf as $t$ tends to zero, recalling also that $u_n>0$ a.e. in $\Omega$, one yields to 
	\[
	\frac{\left(\displaystyle \int_{\Omega}|\nabla \varphi|^p\right)^\frac{1}{p}}{n} + \int_{\Omega} H^{p-1}(\nabla u_n) \nabla H(\nabla u_n) \cdot \nabla \varphi \ge \int_{\Omega}\frac{f\varphi}{u_n^\gamma}+\int_{\Omega} h u_n^{\theta} \varphi.
	\]
	Therefore, by taking the liminf as $n\to +\infty$ in the previous inequality, thanks to Proposition \ref{prop:convgrad} and to the Fatou Lemma, one obtains \eqref{disvar}.

	\medskip
	
	\emph{ii) $u_n$ belongs to $\mathcal{N}^*$ for a sufficiently large $n$.}
	\\ First observe that for any nonnegative $\varphi\in W^{1,p}_0(\Omega)$ and for any $t\ge 0$ one gets
	\begin{equation}\label{caso2_0}
		\int_{\Omega} f (u_n + t\varphi)^{1-\gamma} \le \int_{\Omega} f u_n^{1-\gamma}<+\infty,
	\end{equation}
	as $u_n\in \mathcal{N}^*$.
	Now let us observe that a priori $u_n+t\varphi$ may not belong to $\mathcal{N}^*$. Thus we notice that there exists $\lambda(t,n,\varphi)$ such that $\lambda(t,n,\varphi)(u_n + t\varphi) \in \mathcal{N}^*$. Moreover $\lambda$ satisfies the following
	\begin{equation}\label{qN*}
	\lambda^p(t,n,\varphi) \int_\Omega H^p(\nabla (u_n + t \varphi)) = \lambda^{1-\gamma}(t,n, \varphi) \int_\Omega f(u_n+t\varphi)^{1-\gamma} + \lambda^{\theta+1}(t,n,\varphi) \int_\Omega h(u_n+t\varphi)^{\theta+1}.
	\end{equation}
	We point out that $\lambda$ is not explicit in general. 
	Moreover, it follows from the Lebesgue Theorem and by continuity of the norm that $\lambda$ is continuous with respect to $t$. We also point out that $\lambda(0,n,\varphi)=1$ as, once again, $u_n \in \mathcal{N}^*$. \\
	We denote by 
	$$\lambda'(0,n,\varphi)= \lim_{t\to 0}\frac{\lambda(t,n,\varphi) - 1}{t},$$ 
	while, in case the previous limit does not exist,  $\lambda'(0,n,\varphi)$ is intended as $\displaystyle \lim_{k\to +\infty}\frac{\lambda(t_k,n,\varphi) - 1}{t_k}$ where the sequence $t_k \to 0^+$ is chosen in such a way that the limit exists.
	
	Now we aim to show that $|\lambda'(0,n,\varphi)|$ is bounded independently from $n$. To this end, we take the difference between \eqref{qN*} and the same formula evaluated in $t=0$ obtaining
	\begin{equation*}
	\begin{split}
		\lambda^p(t,n,\varphi)\int_{\Omega}H^p(\nabla (u_n + t\varphi)) - \int_{\Omega}H^p(\nabla u_n) =& \lambda^{1-\gamma}(t,n,\varphi)\int_{\Omega} f (u_n + t\varphi)^{1-\gamma} - \int_{\Omega} f u_n^{1-\gamma}\\
		&+ \lambda^{\theta+1}(t,n,\varphi) \int_\Omega h(u_n+t\varphi)^{\theta+1} - \int_\Omega h u_n^{\theta+1},
	\end{split}
	\end{equation*}
	which, after rearranging, it takes to 
	\begin{align*}
		0=&(\lambda^p(t,n,\varphi)-\lambda^p(0,n,\varphi))\int_{\Omega}H^p(\nabla (u_n + t\varphi)) +\int_{\Omega} \left[H^p(\nabla (u_n + t\varphi)) - H^p(\nabla u_n) \right]
		\\
		&- \lambda^{1-\gamma}(t,n,\varphi)\int_{\Omega} f(u_n + t\varphi)^{1-\gamma} + \int_{\Omega} f u_n^{1-\gamma} - \lambda^{\theta+1}(t,n,\varphi) \int_\Omega h(u_n+t\varphi)^{\theta+1} + \int_\Omega h u_n^{\theta+1}
		\\
		\ge& (\lambda^p(t,n,\varphi)-\lambda^p(0,n,\varphi))\int_{\Omega}H^p(\nabla (u_n + t\varphi)) + \int_{\Omega} \left[H^p(\nabla (u_n + t\varphi)) - H^p(\nabla u_n) \right]
		\\
		&- (\lambda^{1-\gamma}(t,n,\varphi)-\lambda^{1-\gamma}(0,n,\varphi))\int_{\Omega} f (u_n + t\varphi)^{1-\gamma} - (\lambda^{\theta+1}(t,n,\varphi) - \lambda^{\theta+1}(0,n,\varphi))\int_\Omega h(u_n+t\varphi)^{\theta+1} \\
		&- \int_\Omega h \left[(u_n+t\varphi)^{\theta+1} - u_n^{\theta+1}\right].
	\end{align*}
 Therefore, it follows from applications of the Mean Value Theorem that 
	\begin{equation}\label{caso2_1}
		\begin{aligned}
		0\ge & p \zeta_1^{p-1} (\lambda(t,n,\varphi)-1)\int_{\Omega}H^p(\nabla (u_n + t\varphi)) + \int_{\Omega} \left[H^p(\nabla (u_n + t\varphi)) - H^p(\nabla u_n) \right]
		\\
		&- (1-\gamma)\zeta_2^{-\gamma}(\lambda(t,n,\varphi)-1)\int_{\Omega} f (u_n + t\varphi)^{1-\gamma} - (\theta+1) \zeta_3^{\theta}  (\lambda(t,n,\varphi)-1)  \int_{\Omega} h(u_n+t\varphi)^{\theta+1}\\
		&- \int_\Omega h \left[(u_n+t\varphi)^{\theta+1} - u_n^{\theta+1}\right],
		\end{aligned}
	\end{equation}
	where $\zeta_1$, $\zeta_2$ and $\zeta_3$ are positive and tend to $1$ as $t \to 0^+$.
	Now let us divide by $t$ and then we pass to the limit as $t\to 0^+$ in \eqref{caso2_1}, yielding to
	\begin{align*}
		&0\ge p\lambda'(0,n,\varphi)\int_{\Omega}H^p(\nabla u_n) + p\int_{\Omega}H^{p-1}(\nabla u_n) \nabla H(\nabla u_n) \cdot \nabla\varphi + (\gamma-1) \lambda'(0,n,\varphi)\int_{\Omega} f u_n^{1-\gamma} \\
		&\qquad - (\theta+1) \lambda'(0,n,\varphi)  \int_{\Omega} h u_n^{\theta+1} - (\theta+1) \int_\Omega h u_n^{\theta} \varphi\\
		& \ \ =  p\int_{\Omega}H^{p-1}(\nabla u_n) \nabla H(\nabla u_n) \cdot \nabla\varphi - (\theta+1) \int_\Omega h u_n^{\theta} \varphi\\
		&\qquad + \lambda'(0,n,\varphi)\left(p\int_{\Omega}H^p(\nabla u_n)  + (\gamma-1)\int_{\Omega} f u_n^{1-\gamma} - (\theta+1) \int_\Omega h u_n^{\theta+1} \right)  
		\\
		&\stackrel{u_n\in \mathcal{N}^*}{=}  p\int_{\Omega}H^{p-1}(\nabla u_n) \nabla H(\nabla u_n) \cdot \nabla\varphi - (\theta+1) \int_\Omega h u_n^{\theta} \varphi \\
		&\qquad +  \lambda'(0,n,\varphi)\left((p+\gamma-1)\int_{\Omega} f u_n^{1-\gamma} + (p-1-\theta) \int_\Omega h u_n^{\theta+1} \right).
	\end{align*}
	Since $u_n$ is bounded in $W^{1,p}_0(\Omega)$ with respect to $n$ and, as we have shown, sequences into $\mathcal N$ stay away from some ball centered at $0$, then from the previous inequality we deduce that 
	$$\lambda'(0,n,\varphi)\le C,$$ 
	for some $C$ which does not depend on $n$.
	
	\medskip

	Now we show that $\lambda'(0,n,\varphi)$ is bounded from below. Indeed, fixing $w=\lambda(t,n,\varphi)(u_n +t\varphi)$ in \eqref{eke2}, one yields to 	
	$$
	\begin{aligned}
	&\frac{|\lambda(t,n,\varphi)-1|}{n}\left(\int_{\Omega} |\nabla u_n|^p\right)^\frac{1}{p} + \frac{t\lambda(t,n,\varphi)}{n}\left(\int_{\Omega} |\nabla \varphi|^p\right)^\frac{1}{p} 
	\ge\frac{1}{n}\left(\int_{\Omega} |\nabla (u_n-\lambda(t,n,\varphi)(u_n +t\varphi))|^p\right)^\frac{1}{p} \\
		\ge &J(u_n)-J(\lambda(t,n,\varphi)(u_n +t\varphi))
		= \frac{1}{p}\int_{\Omega}H^p(\nabla u_n) +\frac{1}{\gamma-1} \int_{\Omega} f u_n^{1-\gamma} - \frac{1}{\theta+1} \int_\Omega h u_n^{\theta+1}
		\\
		&- \frac{1}{p}\int_{\Omega}H^p(\nabla (u_n + t\varphi)\lambda(t,n,\varphi)) - \frac{1}{\gamma-1} \int_{\Omega} f \left[(u_n+t\varphi)\lambda(t,n,\varphi)\right]^{1-\gamma} \\
		&+ \frac{1}{\theta+1} \int_\Omega h\left[(u_n+t\varphi)\lambda(t,n,\varphi)\right]^{\theta+1}
		=\frac{1}{p}\int_{\Omega} \left[H^p(\nabla u_n)- H^p(\nabla (u_n + t\varphi) \lambda(t,n,\varphi)) \right]\\
		&+\frac{1}{\gamma-1} \int_{\Omega} f \left\{u_n^{1-\gamma}-\left[(u_n+t\varphi)\lambda(t,n,\varphi)\right]^{1-\gamma}\right\}
		- \frac{1}{\theta+1} \int_\Omega h\left\{u_n^{\theta+1}-\left[(u_n+t\varphi)\lambda(t,n,\varphi)\right]^{\theta+1}\right\}. 		
	\end{aligned}
	$$
	From the previous calculation,  recalling that $\lambda(t,n,\varphi)(u_n + t\varphi) \in \mathcal{N}^*$, one can estimate as
	\begin{align*}
		&\frac{t\lambda(t,n,\varphi)}{n}\left(\int_{\Omega} |\nabla \varphi|^p\right)^\frac{1}{p} + \left( \frac{1}{p} + \frac{1}{\gamma-1}\right)\left(\int_{\Omega}H^p(\nabla (u_n + t\varphi))  - \int_{\Omega}H^p(\nabla u_n)\right)\\
		&\ge -(\lambda^p(t,n,\varphi) - 1)\left(\frac{1}{p} + \frac{1}{\gamma-1}\right) \int_{\Omega}H^p(\nabla (u_n + t\varphi)) - \frac{|\lambda(t,n,\varphi)-1|}{n}\left(\int_{\Omega} |\nabla u_n|^p\right)^\frac{1}{p}\\
		& \quad  - \left(\frac{1}{\gamma-1}+\frac{1}{\theta+1}\right) \int_\Omega h\left\{u_n^{\theta+1}-\left[(u_n+t\varphi)\lambda(t,n,\varphi)\right]^{\theta+1}\right\}, \\
		&= -(\lambda^p(t,n,\varphi) - 1)\left(\frac{1}{p} + \frac{1}{\gamma-1}\right) \int_{\Omega}H^p(\nabla (u_n + t\varphi)) - \frac{|\lambda(t,n,\varphi)-1|}{n}\left(\int_{\Omega} |\nabla u_n|^p\right)^\frac{1}{p}\\
		& \quad  +(\lambda^{\theta+1}(t,n,\varphi) - 1) \left(\frac{1}{\gamma-1}+\frac{1}{\theta+1}\right) \int_\Omega h(u_n+t\varphi)^{\theta+1} + \left(\frac{1}{\gamma-1}+\frac{1}{\theta+1}\right) \int_\Omega h\left[(u_n+t\varphi)^{\theta+1}-u_n^{\theta+1}\right],
	\end{align*}
	from which, applying the Mean Value Theorem, one has
	\begin{equation}\label{caso2_2}
		\begin{aligned}
			&\frac{t\lambda(t,n,\varphi)}{n}\left(\int_{\Omega} |\nabla \varphi|^p\right)^\frac{1}{p} + \left( \frac{1}{p} + \frac{1}{\gamma-1}\right)\left(\int_{\Omega}H^p(\nabla (u_n + t\varphi))  - \int_{\Omega}H^p(\nabla u_n)\right)\\
			&\ge - p\zeta_4^{p-1}(\lambda(t,n,\varphi) - 1)\left(\frac{1}{p} + \frac{1}{\gamma-1}\right) \int_{\Omega}H^p(\nabla (u_n + t\varphi)) - \frac{|\lambda(t,n,\varphi)-1|}{n}\left(\int_{\Omega} |\nabla u_n|^p\right)^\frac{1}{p}\\
			& \quad   + (\theta+1)\zeta_5^{\theta}(\lambda(t,n,\varphi) - 1) \left(\frac{1}{\gamma-1}+\frac{1}{\theta+1}\right) \int_\Omega h(u_n+t\varphi)^{\theta+1}\\
			& \quad + \left(\frac{1}{\gamma-1}+\frac{1}{\theta+1}\right) \int_\Omega h\left[(u_n+t\varphi)^{\theta+1}-u_n^{\theta+1}\right],
		\end{aligned}
	\end{equation}
		where $\zeta_4, \zeta_5$ are positive and tend to $1$ as $t\to 0$. Then let us divide \eqref{caso2_2} by $t>0$ and pass to the limit as $t\to 0$, obtaining
	\begin{align*}
		&\frac{\displaystyle\left(\int_{\Omega} |\nabla \varphi|^p\right)^\frac{1}{p}}{n} + \left(1 + \frac{p}{\gamma-1}\right)\int_\Omega H^{p-1}(\nabla u_n)\nabla H(\nabla u_n)\cdot \nabla \varphi\\
		&\ge \lambda'(0,n,\varphi) \left[-\left( 1 + \frac{p}{\gamma-1}\right)\int_{\Omega}H^p(\nabla u_n)  
		- \frac{\operatorname{sgn}(\lambda'(0,n,\varphi))}{n}\left(\int_{\Omega} |\nabla u_n|^p\right)^\frac{1}{p} + \left(1+\frac{\theta+1}{\gamma-1}\right) \int_\Omega h u_n^{\theta+1}\right]\\
		&\quad + \left(1+\frac{\theta+1}{\gamma-1}\right) \int_\Omega h u_n^{\theta} \varphi,
	\end{align*}	
	which, getting rid of the nonnegative last term, implies that 
	\begin{align*}
		\lambda'(0,n,\varphi) & \left[\left( 1 + \frac{p}{\gamma-1}\right)\int_{\Omega}H^p(\nabla u_n)  
		+\frac{\operatorname{sgn}(\lambda'(0,n,\varphi))}{n}\left(\int_{\Omega} |\nabla u_n|^p\right)^\frac{1}{p} - \left(1+\frac{\theta+1}{\gamma-1}\right) \int_\Omega h u_n^{\theta+1}\right]
		\\
		&\ge- \frac{\displaystyle\left(\int_{\Omega} |\nabla \varphi|^p\right)^\frac{1}{p}}{n} - \left(1 + \frac{p}{\gamma-1}\right)\int_\Omega H^{p-1}(\nabla u_n)\nabla H(\nabla u_n)\cdot \nabla \varphi.
	\end{align*}	
	From the previous inequality one can deduce that $\lambda'(0,n,\varphi) \ge c> -\infty$ definitely in $n$ where $c$ is independent of $n$. Indeed, definitely in $n$ and since $u_n$ belongs to $\mathcal{N}^*$, one has 
	\begin{align*}
	&\left[\left( 1 + \frac{p}{\gamma-1}\right)\int_{\Omega}H^p(\nabla u_n)  
	+\frac{\operatorname{sgn}(\lambda'(0,n,\varphi))}{n}\left(\int_{\Omega} |\nabla u_n|^p\right)^\frac{1}{p} - \left(1+\frac{\theta+1}{\gamma-1}\right) \int_\Omega h u_n^{\theta+1}\right]\\
	&=\left[\left(\frac{p-1-\theta}{\gamma-1}\right)\int_{\Omega}H^p(\nabla u_n)  
	+\frac{\operatorname{sgn}(\lambda'(0,n,\varphi))}{n}\left(\int_{\Omega} |\nabla u_n|^p\right)^\frac{1}{p} +  \left(1+\frac{\theta+1}{\gamma-1}\right) \int_\Omega f u_n^{1-\gamma}\right]\ge C_1>0,
	\end{align*}
	with $C_1$ independent of $n$ as $\displaystyle 0<c_1\le \int_\Omega |\nabla u_n|^p\le c_2$ since we have already shown that $u_n$ is bounded in $W^{1,p}_0(\Omega)$ and it also stays away from zero in the same norm. Moreover, an application of the H\"older inequality gives that 
	$$\left|-\frac{\left(\int_{\Omega} |\nabla \varphi|^p\right)^\frac{1}{p}}{n} - \left(1 + \frac{p}{\gamma-1}\right)\int_\Omega H^{p-1}(\nabla u_n)\nabla H(\nabla u_n)\cdot \nabla \varphi\right| \le c,$$
	as it follows, once again, since $u_n$ is bounded in $W^{1,p}_0(\Omega)$ with respect to $n$.
	
	Then, we are ready to show the validity of \eqref{disvar} even in this case.
	Let $\varphi\in W^{1,p}_0(\Omega)$ be nonnegative and observe that it follows from \eqref{eke2} that
	\begin{align*}
		&\frac{|\lambda(t,n,\varphi)-1|}{n}\left(\int_{\Omega} |\nabla u_n|^p\right)^\frac{1}{p} + \frac{t\lambda(t,n,\varphi)}{n}\left(\int_{\Omega} |\nabla \varphi|^p\right)^\frac{1}{p} 
		\\
		&\ge \frac{1}{p}\int_{\Omega}H^p(\nabla u_n) +\frac{1}{\gamma-1} \int_{\Omega} f u_n^{1-\gamma} - \frac{1}{\theta+1} \int_\Omega h u_n^{\theta+1}
		\\
		&\quad - \frac{1}{p}\int_{\Omega}H^p(\nabla ((u_n + t\varphi)\lambda(t,n,\varphi))) - \frac{1}{\gamma-1} \int_{\Omega} f [(u_n+t\varphi)\lambda(t,n,\varphi)]^{1-\gamma} \\
		& \quad + \frac{1}{\theta+1} \int_\Omega h \left[(u_n+t\varphi)\lambda(t,n,\varphi)\right]^{\theta+1}
		\\
		&= - \frac{(\lambda(t,n,\varphi)^p-1)}{p}\int_{\Omega}H^p(\nabla (u_n + t\varphi)) 
		- \frac{1}{p} \left(\int_{\Omega}H^p(\nabla (u_n + t\varphi)) - \int_{\Omega}H^p(\nabla u_n)\right)
		\\
		&\quad - \frac{(\lambda(t,n,\varphi)^{1-\gamma}-1)}{\gamma-1}\int_{\Omega}f(u_n+t\varphi)^{1-\gamma} - \frac{1}{\gamma-1}\left(\int_{\Omega} f(u_n+t\varphi)^{1-\gamma} - \int_{\Omega} fu_n^{1-\gamma}\right)\\
		&\quad +\frac{(\lambda(t,n,\varphi)^{\theta+1}-1)}{\theta+1}\int_{\Omega}h(u_n+t\varphi)^{\theta+1} + \frac{1}{\theta+1}\left(\int_{\Omega} h(u_n+t\varphi)^{\theta+1} - \int_{\Omega} h u_n^{\theta+1}\right). 
	\end{align*}
	Then, applying again the Mean Value Theorem, one gets
	\begin{equation}\label{caso2_3}
	\begin{aligned}
		&\frac{|\lambda(t,n,\varphi)-1|}{n}\left(\int_{\Omega} |\nabla u_n|^p\right)^\frac{1}{p} + \frac{t\lambda(t,n,\varphi)}{n}\left(\int_{\Omega} |\nabla \varphi|^p\right)^\frac{1}{p} 
		\\
		&\ge - \zeta_1^{p-1}(\lambda(t,n,\varphi)-1)\int_{\Omega}H^p(\nabla (u_n + t\varphi)) 
		- \frac{1}{p} \left(\int_{\Omega}H^p(\nabla (u_n + t\varphi)) - \int_{\Omega}H^p(\nabla u_n)\right)
		\\
		&\quad + \zeta_2^{-\gamma} (\lambda(t,n,\varphi)-1)\int_{\Omega}f(u_n+t\varphi)^{1-\gamma} - \frac{1}{\gamma-1}\left(\int_{\Omega} f(u_n+t\varphi)^{1-\gamma} - \int_{\Omega} fu_n^{1-\gamma}\right)\\
		&\quad + \zeta_3^{\theta}(\lambda(t,n,\varphi)-1) \int_{\Omega}h(u_n+t\varphi)^{\theta+1} + \frac{1}{\theta+1}\left(\int_{\Omega} h(u_n+t\varphi)^{\theta+1} - \int_{\Omega} h u_n^{\theta+1}\right),
	\end{aligned}
	\end{equation}
	where $\zeta_1, \zeta_2$ and $\zeta_3$ are positive and tend to $1$ as $t\to 0^+$.
	Now let us divide \eqref{caso2_3} by $t$, passing to the limit as $t$ goes to $0$ and, taking into account that $u_n \in \mathcal{N}^*$ and \eqref{caso2_0}, one has
	\begin{gather} \label{limittcase2}
		\begin{split}
		&\frac{|\lambda'(0,n,\varphi)|}{n}\left(\int_{\Omega} |\nabla u_n|^p\right)^\frac{1}{p} + \frac{1}{n} \left( \displaystyle\int_{\Omega} |\nabla \varphi|^p\right)^\frac{1}{p} \\
		&\ge 	- \int_{\Omega}H^{p-1}(\nabla u_n) \nabla H(\nabla u_n)\cdot \nabla \varphi + \int_{\Omega} fu_n^{-\gamma}\varphi+\int_\Omega h u_n^\theta \varphi. 
		\end{split}
	\end{gather}
	Since  $|\lambda'(0,n,\varphi)|$ is bounded and $u_n$ is bounded in $W^{1,p}_0(\Omega)$ with respect to $n$, one can pass to the limit as  $n\to+\infty$ into \eqref{limittcase2} making use of Fatou Lemma, obtaining that
	\begin{gather*} 
		0 \ge - \int_{\Omega}H^{p-1}(\nabla u)\nabla H(\nabla u) \cdot \nabla \varphi + \int_{\Omega} fu^{-\gamma}\varphi + \int_\Omega h u^\theta \varphi.
	\end{gather*}		
	
	Therefore, regardless of the case considered, we have shown that \eqref{disvar} holds.
	
	\medskip
	
	To conclude the proof we are left to prove that $u$ is a weak solution to problem \eqref{PbMain}. 
%
Therefore let us take $\varphi= (u+\varepsilon\phi)^+$ in \eqref{disvar} for $\varepsilon>0$ and $\phi\in W^{1,p}_0(\Omega)$ and, as $u$ belongs to $\mathcal{N}^*$, one yields to
	\begin{align*}
		\displaystyle 0\le& \int_{\{u+\varepsilon\phi\ge 0\}} H^{p-1}(\nabla u)\nabla H(\nabla u) \cdot \nabla (u+\varepsilon\phi) - \int_{\{u+\varepsilon\phi\ge 0\}} \frac{f(u+\varepsilon\phi)}{u^\gamma} 	-  \int_{\{u+\varepsilon\phi\ge 0\}} h u^\theta (u+\varepsilon \phi)
		\\
		=& \int_{\Omega} H^{p-1}(\nabla u)\nabla H(\nabla u) \cdot \nabla (u+\varepsilon\phi) - \int_{\Omega} \frac{f(u+\varepsilon\phi)}{u^\gamma } -  \int_\Omega h u^\theta (u+\varepsilon \phi)\\
		& - \int_{\{u+\varepsilon\phi< 0\}} H^{p-1}(\nabla u)\nabla H(\nabla u) \cdot \nabla (u+\varepsilon\phi)+ \int_{\{u+\varepsilon\phi< 0\}} \frac{f(u+\varepsilon\phi)}{u^\gamma} +  \int_{\{u+\varepsilon\phi < 0\}} h u^\theta (u+\varepsilon \phi)	\\
		=&\int_{\Omega} H^{p-1}(\nabla u)\nabla H(\nabla u) \cdot \nabla u -  \int_{\Omega} fu^{1-\gamma} - \int_\Omega hu^{\theta+1}\\
		& + \varepsilon \left( \int_{\Omega} H^{p-1}(\nabla u)\nabla H(\nabla u) \cdot \nabla \phi - \int_{\Omega} \frac{f\phi}{u^\gamma} - \int_\Omega h u^\theta \phi\right) 
		\\
		&- \int_{\{u+\varepsilon\phi< 0\}} H^{p-1}(\nabla u)\nabla H(\nabla u) \cdot \nabla (u+\varepsilon\phi) + \int_{\{u+\varepsilon\phi< 0\}} \frac{f(u+\varepsilon\phi)}{u^\gamma} +  \int_{\{u+\varepsilon\phi < 0\}} h u^\theta (u+\varepsilon \phi)
		\\
		=& \int_{\Omega} H^{p}(\nabla u) -  \int_{\Omega} fu^{1-\gamma} - \int_\Omega h u^{\theta+1} \\
		&+ \varepsilon \left( \int_{\Omega} H^{p-1}(\nabla u)\nabla H(\nabla u) \cdot \nabla \phi - \int_{\Omega} \frac{f\phi}{u^\gamma} - \int_\Omega h u^\theta \phi - \int_{\{u+\varepsilon\phi< 0\}} H^{p-1}(\nabla u)\nabla H(\nabla u) \cdot \nabla \phi\right)
		\\
		&- \int_{\{u+\varepsilon\phi< 0\}} H^{p}(\nabla u) + \int_{\{u+\varepsilon\phi< 0\}} \frac{f(u+\varepsilon\phi)}{u^\gamma} +  \int_{\{u+\varepsilon\phi < 0\}} h u^\theta (u+\varepsilon \phi)
		\\
		\le&\varepsilon \left( \int_{\Omega} H^{p-1}(\nabla u)\nabla H(\nabla u) \cdot \nabla \phi -  \int_{\Omega} \frac{f\phi}{u^\gamma} - \int_\Omega h u^\theta \phi- \int_{\{u+\varepsilon\phi< 0\}} H^{p-1}(\nabla u)\nabla H(\nabla u) \cdot \nabla \phi\right),
	\end{align*}
	where for the last inequality we also used that $u \in \mathcal{N^*}$. 
	Let us now divide by $\varepsilon$ the previous inequality and let us take  $\varepsilon \to 0$, deducing that  
	\begin{equation*}
		\displaystyle \int_{\Omega} H^{p-1}(\nabla u)\nabla H(\nabla u) \cdot \nabla \phi - \int_{\Omega} \frac{f\phi}{u^\gamma} - \int_\Omega h u^\theta \phi \ge 0, \ \ \ \forall \phi \in W^{1,p}_0(\Omega).
	\end{equation*} 
	Taking $-\phi$ in place of $\phi$ we deduce the opposite inequality. This concludes the proof.
\end{proof}

\subsection{Proof of the existence results}

In this section we show existence Theorems \ref{intro:thm:existencegammaminoredi1}, \ref{intro:thm:existence1} and \ref{intro:thm:existence2}.
The first result we prove is Theorem  \ref{intro:thm:existencegammaminoredi1} whose proof is similar to the one of \cite[Theorem $3.2$]{DurOl} and for which we give only a sketch.

\begin{proof}[Proof of Theorem \ref{intro:thm:existencegammaminoredi1}]
	Let us consider an approximating sequence of solutions $u_n \in  W_0^{1,p}(\Omega)$ to 
	\begin{equation}\label{pbn}
		\begin{cases}
			\displaystyle -\Delta_{p}^{H}u_n = \frac{f_n}{(u_n+\frac{1}{n})^{\gamma}} + h_n T_n(u_n^{\theta})  & \text{in } \Omega,\\
			u_n = 0 & \text{on } \partial\Omega,
		\end{cases}
	\end{equation}
	where $f_n= T_n(f)$, $h_n=T_n(h)$. In particular, the existence of such $u_n$ classically follows from \cite{LL}. 
	Now let us test the weak formulation of \eqref{pbn} by $u_n$ itself and using the Euler Theorem, we get
	\[
	\int_{\Omega} H^p(\nabla u_n) \le \int_{\Omega} f_n u_n^{1-\gamma} + \int_{\Omega} h_n u_n^{1+\theta}.
	\]
	Now assume that $\gamma<1$; then it follows from \eqref{controlloH} and from the H\"older inequality that it holds
	\[
	\alpha^p \int_{\Omega} |\nabla u_n|^p \le \|f\|_{L^{(\frac{p^*}{1-\gamma})'}(\Omega)} \|u_n\|_{L^{p^*}(\Omega)}^{1-\gamma} + \|h\|_{L^{(\frac{p^*}{1+\theta})'}(\Omega)} \|u_n\|_{L^{p^*}(\Omega)}^{1+\theta}.
	\]
	Now an application of the Sobolev embedding theorem gives that $u_n$ is bounded in $W_0^{1,p}(\Omega)$ since both $1-\gamma$ and $1+\theta$ are strictly less than $p$. 
	In case $\gamma=1$ we repeat the same argument only for term involving $k$ as one take advantage of the fact $f_n\le f\in L^1(\Omega)$.
	
	Now the aim is passing to the limit with respect to $n$.	The passage to the limit is identical to the one given in the proof \cite[Theorem $3.2$]{DurOl}. The only observation is that, as $u_n$ is bounded in $W_0^{1,p}(\Omega)$, there exists a function $u \in W_0^{1,p}(\Omega)$ and a subsequence (still denoted by $u_n$) such that $u_n \rightharpoonup u$ weakly in $W_0^{1,p}(\Omega)$ and $u_n \to u$ strongly in $L^q(\Omega)$ for $q<p^*$ and almost everywhere.
	In particular, as the right hand of \eqref{pbn} is locally bounded in $L^1(\Omega)$ with respect to $n$, one can apply \cite[Theorem 2.1]{BoMu92} which provides that, up to a subsequence,
	\[
	\nabla u_n(x) \to \nabla u(x) \quad \text{for almost every } x \in \Omega.
	\]
	The above considerations allow to pass to the limit in left-hand of the weak formulation of \eqref{pbn} as already presented in \cite[Theorem $3.2$]{DurOl}. 
	This concludes the proof.
\end{proof}

Now, in the same spirit of \cite{lazer}, we are ready to prove the necessary and sufficient condition to have existence of weak solution to \eqref{PbMain}. The first theorem is for bounded functions $f$.

\begin{proof}[Proof of Theorem \ref{intro:thm:existence1}]
	
	We start by proving that a solution $u\in W^{1,p}_0(\Omega)$ to \eqref{PbMain} exists once $\gamma < 2 + \frac{1}{p-1}$.
	First observe that, if $\gamma \le 1$, this follows immediately from Theorem \ref{intro:thm:existencegammaminoredi1}; therefore, from here on, we assume $1<\gamma < 2 + \frac{1}{p-1}$. 
	In this case, to prove the existence  of a solution $u\in W^{1,p}_0(\Omega)$ to \eqref{PbMain}, we make use of Theorem \ref{thmexistence1}. 
	
	To this aim we consider $u_0 = \phi(x)^t$, for some $t>\frac{p-1}{p}$, where $\phi_1$ is the first eigenfunction of \eqref{einvaluprob}; let observe that, under the assumptions on $t$, $\phi(x)^t \in W^{1,p}_0(\Omega)$. 
	
	It holds
	\[
	\int_\Omega f|u_0|^{1-\gamma} \leq \|f\|_{L^\infty(\Omega)} \int_\Omega \phi_1^{t(1-\gamma)}. 
	\]	
	Therefore, as one can fix $t$ such that $t(1-\gamma)>-1$ if $\frac{p-1}{p}<\frac{1}{\gamma-1}$ and recalling \eqref{phi_integrabilita}, one has
	\begin{equation}\label{eq:eigenfuctconverg}
		\int_\Omega \phi_1^{t(1-\gamma)}  < +\infty \quad \text{if} \quad \gamma < 2 + \frac{1}{p-1}.
	\end{equation}
	Hence, since \eqref{eq:compatibility} is fulfilled, thanks to Theorem \ref{thmexistence1} we deduce that there exists a unique weak solution to \eqref{PbMain} for any $1<\gamma<2+\frac{1}{p-1}$.
	
	\medskip
		
	Now assume that $u\in W^{1,p}_0(\Omega)$ is a solution to \eqref{PbMain} in case $\gamma\ge 2+\frac{1}{p-1}$.
	Therefore there exists a non-negative sequence $w_n\in C^1_c(\Omega)$ converging to $u$ in $W^{1,p}_0(\Omega)$ as $n\to+\infty$. Then it follows from Theorem \ref{thm:estbelowabove}	and from the Fatou Lemma that one has 
	\[
	\liminf_{n\to+\infty}\int_{\Omega}w_nfu^{-\gamma}\ge \int_{\Omega}\frac{f}{u^{\gamma-1}} \geq \int_\Omega \frac{f}{a^{\gamma-1}\phi_1^{t(\gamma-1)}}=+\infty.
	\]
	On the other hand, we can take $w_n$ as a test function in the weak formulation of \eqref{PbMain} obtaining
	\[
	\displaystyle \int_{\Omega} H^{p-1}(\nabla u)\nabla u\cdot \nabla w_n= \int_{\Omega} \frac{f(x)}{u^{\gamma}}w_n+\int_\Omega h(x) u^\theta w_n,
	\]
	and passing to the limit as $n$ goes to +$\infty$, thanks to \eqref{eq:eulero} one gets
	\[
	\displaystyle\int_{\Omega} H^p(\nabla u)=+\infty,
	\]
	which contradicts that $u$ belongs to $W^{1,p}_0(\Omega)$. This concludes the proof.
\end{proof}

The second result is presented for unbounded $f$'s when $\gamma>1$.

\begin{proof}[Proof of Theorem \ref{intro:thm:existence2}]
	Assume first that $\gamma < 2 + \frac{1}{p-1} - \frac{p}{(p-1)m}$. Once again, we prove the existence  of a solution $u\in W^{1,p}_0(\Omega)$ to \eqref{PbMain} through an application of Theorem \ref{thmexistence1}. 
	
	Let $u_0 = \phi(x)^t$, for some $t>\frac{p-1}{p}$, where $\phi_1$ is the first eigenfunction of \eqref{einvaluprob}. Let recall that, under the assumptions on $t$, $\phi(x)^t \in W^{1,p}_0(\Omega)$. 
	
\begin{equation}
	\int_{\Omega} f u_0^{1-\gamma} \leq \left(\int_\Omega f^m\right)^{\frac{1}{m}} \cdot \left(\int_{\Omega} \phi_1^{t(1-\gamma)m'}\right)^{\frac{1}{m'}}.
\end{equation}
Even in this case one can fix $t$ such that	$t(1-\gamma)m'>-1$ if $\frac{p-1}{p}<\frac{m-1}{(\gamma-1)m}$. Then, once again by recalling \eqref{phi_integrabilita}, one has
\[
\int_{\Omega} \phi_1^{t(1-\gamma)m'} < +\infty \quad \text{if} \quad \gamma < 2 + \frac{1}{p-1} - \frac{p}{(p-1)m}.
\]
Therefore it follows from Theorem \ref{thmexistence1} that there exists a unique weak solution to \eqref{PbMain} for any $1<\gamma<2+\frac{1}{p-1}- \frac{p}{(p-1)m}$.

\medskip

Now observe that, in order to conclude the proof, it is sufficient to reason as in Theorem $5.7$ of \cite{OPsurvey} where the reverse implication is shown for $H(\xi) = |\xi|$, $p=2$ and $k=0$. Indeed the same reasoning holds even for a nonnegative and bounded function $k$. This concludes the proof.
\end{proof}

\subsection{Proof of Theorem \ref{thm:estbelowabove}}

In this section we show the behaviour of the weak solution in terms of the first eigenfunction of anisotropic $p$-laplacian, namely we prove Theorem \ref{thm:estbelowabove}.

\begin{proof}[Proof of Theorem \ref{thm:estbelowabove}]
	First observe that the equation solved by $u$ can be rewritten as
	\begin{equation}\label{pbsing}
		-\Delta_p^H u = \frac{g(x)}{u^\gamma} \quad \text{in } \Omega,
	\end{equation}
	where $g(x):=f(x)+h(x)u(x)^{\gamma+\theta}\in L^\infty(\Omega)$ as $u,f,h \in L^\infty(\Omega)$.  Moreover as $f$ is bounded from below in $\Omega$, one also gets that there exists $m,M>0$ such that $0 < m \le p \le M$ almost everywhere in $\Omega$.
	
	\medskip
	
	\textbf{Case $\gamma>1$.} 
	
	In this case the barriers we find for $u$ are of the form $\Psi := s\phi_1^\eta$ where $\eta={\frac{p}{\gamma+p-1}}$. Firstly, as we need them to be weak sub-supersolution to \eqref{pbsing}, they need to belong to $W_0^{1,p}(\Omega)$, namely that $|\nabla \Psi| = s\eta\phi_1^{\eta-1}|\nabla\phi_1| \in L^p(\Omega)$. In particular we have
	\[
	\int_\Omega |\nabla \Psi|^p = (st)^p \int_\Omega \phi_1^{p(\eta-1)}|\nabla\phi_1|^p
	\]
	and, as $|\nabla\phi_1|^p \in L^\infty(\Omega)$ and recalling \eqref{phi_integrabilita}, the integral on the right-hand of the previous is finite if and only if $p(\eta-1) > -1$, which is equivalent to require $\eta > \frac{p-1}{p}$. Thus one has
	\[
	\frac{p}{p-1+\gamma} > \frac{p-1}{p} \iff  \gamma < 2 + \frac{1}{p-1}.
	\]
	This shows that $\Psi$ belongs to $W_0^{1,p}(\Omega)$.
	
	\medskip
	Now we observe that
	\[
	-\Delta_p^H \Psi = \frac{q(x,s)}{\Psi^\gamma}, \quad \text{where} \quad q(x,s):=s^{\gamma+p-1} \eta^{p-1} \left[ (1-\eta)(p-1) H^p(\nabla \phi_1) + \lambda_1 \phi_1^p \right].
	\]
	As a supersolution we set $u_2=s_2\phi_1^\eta$ and we aim to show that 
	\begin{equation}\label{lmsuper}
		q(x,s_2) \ge g(x)
	\end{equation}
	for $s_2$ large enough. 
	
	Indeed, as $\phi_1 \in C^1(\overline{\Omega})$, it follows by the Hopf Lemma (see \cite[Theorem $4.5$]{CSR}) that one can fix $\varepsilon$ small enough such  that $|\nabla \phi_1| \not= 0$ on
	\begin{equation*}
		\Omega_{\varepsilon}=\{x\in\Omega : d(x)<\varepsilon\}.
	\end{equation*}
	If  $x\in \Omega_\varepsilon$ then \eqref{lmsuper} holds as one can ask for 
	\begin{equation}\label{lm1}
		s_2\ge \left(\frac{\displaystyle\max_{x\in\overline{\Omega}_\varepsilon} g(x)}{\eta^{p-1}(1-\eta)\alpha^p\displaystyle \min_{x\in \overline{\Omega}_\varepsilon}|\nabla\phi_1|^p}\right)^\frac{1}{\gamma+p-1}.
	\end{equation}
	Otherwise if $x\in \Omega\setminus\Omega_\varepsilon$ we need $s_2$ such that
	\begin{equation}\label{lm2}
		s_2\ge \left(\frac{\displaystyle\max_{x\in\Omega\setminus\Omega_\varepsilon} g(x)}{\eta^{p-1}\lambda_1\displaystyle \min_{x\in\Omega\setminus\Omega_\varepsilon}\phi_1^{p}}\right)^{\frac{1}{\gamma+p-1}}.
	\end{equation}
	By taking $s_2$ as the maximum among the right-hand of \eqref{lm1} and \eqref{lm2} then \eqref{lmsuper} holds. 
	
	We now look for a subsolution of the form $u_1 := s_1\phi_1^\eta$, where $s_1>0$. In particular we need
	\[
	q(x,s_1) \le m< g \ \text{in }\Omega.
	\]
	Then, as $\phi_1 \in C^1(\overline{\Omega})$ one has 
	\[
	\eta^{p-1} \left[ (1-\eta)(p-1) H^p(\nabla \phi_1) + \lambda_1 \phi_1^p \right]
	\]
	is a continuous function bounded from above in $\Omega$ by a positive constant $\overline{C} > 0$.
	
	Then we just need to show that 
	\[
	s_1^{\gamma+p-1} \overline{C} \le m,
	\]
	which is verified for $s_1 > 0$ small enough.

	\medskip
	
	\textbf{Case $0<\gamma\le 1$}.
	
	In this case we look for a supersolution of the type $u_2=s_2\phi_1^t$ ($s_2>0$) where  
	$t \in \left(\frac{p-1}{p}, 1\right)$. Analogously to the case $\gamma>1$ one has   
	\[
	-\Delta_p^H u_2 = \frac{q(x,s_2)}{u_2^\gamma}, \quad \text{where} \quad q(x,s_2):=s_2^{\gamma+p-1} t^{p-1} \left[ (1-t)(p-1) \phi_1^{t(p-1)-p+t\gamma}H^p(\nabla \phi_1) + \lambda_1 \phi_1^{t(p-1)+t\gamma} \right].
	\]
	Now observe that,  as $t(p-1)-p+t\gamma<0$, a very similar reasoning to the case $\gamma>1$, one can show that $q(x,s_2) \ge g(x)$ in $\Omega$ which holds fixing $s_2$ large enough. 
	
	For a subsolution we take $u_1 := s_1\phi_1$, where $s_1>0$. In particular we need
	\[
	s_1^{\gamma+p-1} \lambda_1 \phi_1^{p-1+\gamma} \le m< p \ \text{in }\Omega,
	\]
	which is verified for $s_1 > 0$ small enough. 
	
	\medskip
	
	Therefore, in order to conclude, one can apply Proposition \ref{thm:Comparison_Principle} which guarantees that $u_1 \le u \le u_2$ almost everywhere in $\Omega$. This concludes the proof.
\end{proof}

\end{document}